\newtheorem {lemma}{Lemma}
\newtheorem {theorem}{Theorem}
\newtheorem {remark}{Remark}
\newcommand{\R}{{\mathbb R}}
\newcommand{\C}{{\mathbb C}}
\newcommand{\N}{{\mathbb N}}
\def\F{{\mathcal F}}
\def\res{\mathop{\rm res}}
\def\sign{\ensuremath{{\rm sign}}}
\def\linspan{\ensuremath{{\rm span}}}
\newcommand{\supp}{{\rm supp}}
\def \Im{{\rm Im\,}}
\newcommand{\pd}{\partial}
\newcommand{\p}{\partial}
\newcommand{\norm}[1]{\left\|#1 \right\|}
\def\bar{\overline}
\def\hat{\widehat}
\def\tilde{\widetilde}
\renewcommand{\l}{\left}
\renewcommand{\r}{\right}
\begin{document}

\title{Inverse problem for wave equation with sources and observations on disjoint sets}
\author{Matti Lassas and Lauri Oksanen}
\address{Department of Mathematics and Statistics,
University of Helsinki, P.O. Box 68 FI-00014, Finland}
\eads{\mailto{matti.lassas@helsinki.fi}, \mailto{lauri.oksanen@helsinki.fi}}


\begin{abstract}
We consider an inverse problem for a hyperbolic partial differential equation 
on a compact Riemannian manifold. Assuming that 
$\Gamma_1$ and $\Gamma_2$ are  two disjoint open subsets of the boundary of the manifold
we define the restricted Dirichlet-to-Neumann operator $\Lambda_{\Gamma_1,\Gamma_2}$.
This operator corresponds the boundary measurements when we have smooth sources 
supported on $\Gamma_1$ and the fields produced
by these sources are observed on $\Gamma_2$.
We show that when $\Gamma_1$ and $\Gamma_2$ are  disjoint but
their closures intersect at least at one point, then the  restricted Dirichlet-to-Neumann operator $\Lambda_{\Gamma_1,\Gamma_2}$ determines the Riemannian manifold
and the metric on it up to an isometry.
In the Euclidian space, the result yields that 
an anisotropic wave speed inside a compact body is determined,
up to a natural coordinate transformations, by 
measurements on the boundary of the body 
even when wave sources are kept away from receivers. 
Moreover, we show that
if we have three arbitrary non-empty open subsets $\Gamma_1,\Gamma_2$,
and $\Gamma_3$ of the boundary, then the restricted Dirichlet-to-Neumann operators 
$\Lambda_{\Gamma_j,\Gamma_k}$ for $1\leq j<k\leq 3$ determine  
the Riemannian manifold to an isometry. Similar result is proven also for the finite-time
boundary measurements when the hyperbolic equation satisfies an exact
controllability condition.
\end{abstract}
\ams{35R30}
\submitto{Inverse Problems}

\noindent{\it Keywords\/}: Inverse problems, wave equation, partial data.

\section{Introduction and main results}

Let $M$ be a compact and connected $C^\infty$-smooth manifold of dimension $n$ 
and let $g$ be a $C^\infty$-smooth Riemannian metric on $M$.
Let $q$ be a real-valued $C^\infty$-smooth function on $M$, and
denote by $\Delta_g$ the Laplace-Beltrami operator on $M$.
We consider a hyperbolic inverse problem corresponding to the 2nd order elliptic operator
\begin{equation*}
a(x,D) := -\Delta_g + q(x).
\end{equation*}

In local coordinates $g$ is a positive-definite $C^\infty$-smooth matrix 
$(g_{jk}(x))_{j,k=1}^n$ with the inverse $(g^{jk}(x))_{j,k=1}^n$ and
\begin{equation}
\label{def:Laplace_Beltrami}
a(x,D) u = -|g|^{-1/2} \sum_{j,k = 1}^n \frac{\p}{\p x^j}
 \left( g^{jk} |g|^{1/2} \frac{\p}{\p x^k} u\right) + qu,
\end{equation}
where $|g| := \det (g_{jk})$.
Hence our results cover the setting, where $M \subset \R^n$ is an open domain with smooth 
boundary and $a(x,D)$ is an elliptic operator of the form (\ref{def:Laplace_Beltrami}).

Let $H^s(M)$ be the Sobolev space of $s \in \N$ times weakly differentiable functions on $M$,
and let $H_0^1(M)$ be the $H^1(M)$ closure of $C_0^\infty(M)$, the space of smooth compactly supported functions.
The operator 
\begin{equation} \label{def:dirichlet_op}
Au(x) := a(x,D) u, \quad D(A) := H^2(M) \cap H_0^1(M)
\end{equation}
is self-adjoint in $L^2(M) = L^2(M, dV_g)$, where 
$dV_g$ is the Riemannian volume measure.
In local coordinates $dV_g = |g|^{1/2} dx$.

Denote by $v^f(x,t) = v(x,t)$ the solution of the initial boundary value problem
\begin{eqnarray} \label{def:wave_eq}
& \pd_t^2 v + a(x,D)v = 0\quad {\rm in}\ M \times (0,\infty),
\\& v|_{\pd M \times (0,\infty)} = f, \nonumber
\\& v|_{t=0} = \pd_t v|_{t=0} = 0, \nonumber
\end{eqnarray}
for $ f\in C_0^\infty(\pd M \times (0,\infty))$,  
and define the hyperbolic Dirichlet-to-Neumann operator 
\begin{equation*}
\Lambda : C_0^\infty(\pd M \times (0,\infty)) \to C^\infty(\pd M \times (0,\infty)),
\quad \Lambda f := \pd_\nu v^f|_{\p M \times \R_+},
\end{equation*}
where $\pd_{\nu}$ is the normal derivative on $\pd M$.
In local coordinates the exterior conormal $\nu$ is the covector $(\nu_1, \dots, \nu_n)$ 
with
\begin{equation*}
\sum_{j,k=1}^n \nu_j(x) g^{jk}(x) \nu_k(x) = 1, \quad x \in \p M,
\end{equation*}
and
\begin{equation*}
\pd_\nu = \sum_{j,k=1}^n \nu_j g^{jk} \frac{\p}{\p x^k}.
\end{equation*}

Denote by $\Lambda_{\Gamma_1, \Gamma_2}^T$ 
the restriction of the Dirichlet-to-Neumann operator
\begin{equation*}
\Lambda_{\Gamma_1, \Gamma_2}^T : 
C_0^\infty(\Gamma_1 \times (0,T)) \to C^\infty(\Gamma_2 \times (0,T)),
\end{equation*}
where $\Gamma_1, \Gamma_2 \subset \pd M$ are open.
Furthermore, denote $\Lambda_{\Gamma_1, \Gamma_2} := \Lambda_{\Gamma_1, \Gamma_2}^\infty$.

It is well known, that the map $\Lambda$
determine the manifold $(M, g)$ up to an isometry \cite{BeKu}.
This is also true for the restriction $\Lambda_{\Gamma, \Gamma}^T$
when $\Gamma$ is nonempty and $T$ is sufficiently large \cite{KaKu}.



In many applications observations of physical fields can not be
done on the same locations where the sources of the fields
are. For instance, in imaging in Earth Sciences, elastic
or acoustic fields are often implemented using explosions \cite{Sy, Ra}.
In such a case observation devices need to be far away from the sources.

Similarly, in electromagnetic imaging, it is technically difficult
to use electrodes at the same time as sources and for
making observations. These are typical examples of cases
where the observation devices and the sources
of the fields are supported on disjoint sets.

In this paper we show, that for certain collections of pairs $(\Gamma_1, \Gamma_2)$ of
open and disjoint subsets of $\pd M$, the operators $\Lambda_{\Gamma_1, \Gamma_2}^T$ 
determine the manifold $(M, g)$ up to an isometry.

\begin{theorem} \label{thm:touching_main}
Let $\Gamma_1, \Gamma_2, \Sigma \subset \p M$ be open sets such that 
$\bar \Gamma_1, \bar \Gamma_2 \subset \Sigma$ and
$\overline \Gamma_1 \cap \overline \Gamma_2 \ne \emptyset$. 
Then $\Sigma$, given as a smooth manifold, and the operator $\Lambda_{\Gamma_1, \Gamma_2}$
determine the manifold $(M,g)$ up to an isometry.
\end{theorem}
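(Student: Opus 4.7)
The plan is to reduce to the partial-data reconstruction of \cite{KaKu} by showing that $\Lambda_{\Gamma_1, \Gamma_2}$, together with the smooth structure of $\Sigma$, determines the family of boundary distance functions $\{r_x : x \in M\}$, $r_x(y) = d_g(x,y)$, on the open set $\Gamma_1 \cup \Gamma_2 \subset \partial M$. Such a family is known to characterise $(M,g)$ up to isometry.

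First, self-adjointness of the operator $A$ in (\ref{def:dirichlet_op}) combined with the time-reversal symmetry of the wave equation yields a Green-type identity from which $\Lambda_{\Gamma_1, \Gamma_2}$ determines $\Lambda_{\Gamma_2, \Gamma_1}$. With both maps available, Blagoveshchenskii's identity expresses
$$B_T(f,h) := \bigl(u^f(T), u^h(T)\bigr)_{L^2(M)}, \quad f \in C_0^\infty(\Gamma_1 \times (0,T)),\ h \in C_0^\infty(\Gamma_2 \times (0,T)),$$
as a sum of boundary integrals with integrands of the form $\Lambda f \cdot \tilde h$ and $\tilde f \cdot \Lambda h$, where $\tilde f, \tilde h$ are time-reflected versions of $f, h$ supported on $\Gamma_1, \Gamma_2$ respectively. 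The first of these integrals reduces to $\Gamma_2 \times (0, 2T)$ and uses only the known $\Lambda_{\Gamma_1, \Gamma_2} f$; the second reduces to $\Gamma_1 \times (0,2T)$ and uses only $\Lambda_{\Gamma_2, \Gamma_1} h$. Hence $B_T$ is computable from the data.

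Next, Tataru's sharp unique continuation theorem gives that the $L^2(M)$-closure of $\{u^h(T) : h \in C_0^\infty(V \times (T-s, T))\}$ equals $L^2(M(V, s))$ for every open $V \subset \partial M$ and $s > 0$, where $M(V,s) := \{x \in M : d_g(x,V) \leq s\}$. Choosing $T$ larger than the diameter of $(M,g)$, both wave families (from $\Gamma_1$ and from $\Gamma_2$) are dense in $L^2(M)$, so every $L^2(M)$-vector is determined by its $B_T$-pairings. Testing a wave $u^f(T)$ against wave-packets supported on arbitrarily small open $V \subset \Gamma_1 \cup \Gamma_2$ (which are known subsets of $\Sigma$) and varying $s$, one detects precisely when $\supp u^f(T) \subset M(V,s)$. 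A focusing construction along the BC-method lines then yields the values $r_x|_V$ for every $x \in M$ and every open $V \subset \Gamma_1 \cup \Gamma_2$, giving the desired boundary distance representation on $\Gamma_1 \cup \Gamma_2 \subset \Sigma$.

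The main obstacle is that $B_T$ is not an inner product on any single space: since $\Gamma_1 \cap \Gamma_2 = \emptyset$, the same-side norms $\|u^f(T)\|_{L^2(M)}^2$ are not directly accessible from the data, and the usual Blagoveshchenskii-based norm identity underlying the BC method cannot be applied as is. The hypothesis $\overline{\Gamma_1} \cap \overline{\Gamma_2} \neq \emptyset$ enters essentially at this point: it guarantees that near a common closure point $p$ the short-time wave cones from $\Gamma_1$ and from $\Gamma_2$ overlap, so that waves from one side admit $L^2(M)$-approximation by waves from the other via Tataru's theorem with no propagation-delay gap. Making this approximation sharp enough to carry out the focusing entirely through cross pairings $B_T$ is the technical heart of the argument.
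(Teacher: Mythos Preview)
Your proposal follows a time-domain BC-method route (Blagoveshchenskii identity, Tataru density, boundary distance functions), whereas the paper proceeds purely spectrally: it extracts from $\Lambda_{\Gamma_1,\Gamma_2}$, via meromorphic continuation in time-frequency, the eigenvalues $\lambda_j$ and the finite-rank operators $L_{\Gamma_1,\Gamma_2;j}f=\sum_{k\in I_j}(f,\partial_\nu\phi_k)_{L^2(\partial M)}\partial_\nu\phi_k|_{\Gamma_2}$, and then shows (Theorem~\ref{thm:boudary_data_to_spectral_data}) that these determine the boundary spectral data on $\Gamma_2$ up to a single multiplicative constant. The touching condition enters only at one point $x_0\in\overline\Gamma_1\cap\overline\Gamma_2$: since the kernel $l_0(x,y)=\eta(x)\partial_\nu\phi_0(x)\partial_\nu\phi_0(y)$ is smooth on $\Sigma\times\Sigma$, all mixed partials $\partial_x^\alpha\partial_y^\beta l_0$ at $(x_0,x_0)$ are determined by the data on $\Gamma_1\times\Gamma_2$. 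A calculus lemma (Lemmas~\ref{lem:symmetry_for_derivatives_1d}--\ref{lem:symmetry_for_derivatives}) on the symmetry of such derivatives, together with a strong unique continuation result for eigenfunctions at a boundary point (Lemma~\ref{lem:unique_continuation}), pins down the unknown density ratio $\eta/\tilde\eta$ to infinite order at $x_0$, which is enough to match the change-of-basis matrices on the two sides and force orthogonality.

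Your outline, by contrast, leaves the decisive step open: you correctly isolate the obstruction that only the cross pairings $B_T(f,h)$ with $f$ on $\Gamma_1$ and $h$ on $\Gamma_2$ are computable, and you propose to recover the missing same-side norms by approximating $u^f(T)$ with waves from $\Gamma_2$. But to \emph{verify} such an approximation from the data one needs precisely a norm or a Cauchy criterion, and the cross pairings alone do not supply one; weak convergence against $\Gamma_1$-waves does not give strong convergence without a uniform bound you cannot certify. The paper meets exactly this difficulty in the three-set Theorem~\ref{thm:combining_data_finite_time}, and there it is resolved only under an explicit exact-controllability hypothesis (assumption~(A)) that lets one represent $u^f(T)$ exactly by a $\Gamma_3$-wave. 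No analogous mechanism is offered in your sketch, and the remark that the touching condition removes a ``propagation-delay gap'' does not by itself produce the needed norm control. As written, the proposal identifies the gap but does not close it.
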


\begin{theorem} 
\label{thm:combining_data_infinite_time}
Let $\Gamma_1$, $\Gamma_2, \Gamma_3 \subset \p M$ be open and nonempty. Then 
the smooth manifolds $\Gamma_p$, $p = 1, 2, 3$, and
the operators
\begin{equation*}
\Lambda_{\Gamma_1, \Gamma_2},\  
\Lambda_{\Gamma_1, \Gamma_3},\  
\Lambda_{\Gamma_2, \Gamma_3}  
\end{equation*}
determine the manifold $(M,g)$ up to an isometry.
\end{theorem}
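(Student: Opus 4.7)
The plan is to reduce Theorem~\ref{thm:combining_data_infinite_time} to the classical Belishev--Kurylev result cited in the introduction: once the diagonal operator $\Lambda_{\Gamma_1,\Gamma_1}$ is recovered, then, since $\Gamma_1$ is given as a smooth manifold, $(M,g)$ is determined up to isometry.

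First, self-adjointness of $a(x,D)$ together with Green's identity applied to two waves $v^f$ and $v^h$ with supports in different $\Gamma_j\times\R_+$ and $\Gamma_k\times\R_+$ yields a reciprocity identity (with a time reflection) from which $\Lambda_{\Gamma_k,\Gamma_j}$ is determined by $\Lambda_{\Gamma_j,\Gamma_k}$; hence all six ordered cross operators are at our disposal. A standard Blagovescenskii--type identity then expresses $\langle v^f(T),v^h(T)\rangle_{L^2(M)}$ as a bilinear boundary functional of the pairs $(f,\Lambda f)$ and $(h,\Lambda h)$; when $\supp f\subset\Gamma_j$ and $\supp h\subset\Gamma_k$ with $j\ne k$, the Dirichlet traces vanish where they would pair against an unknown, and only cross operators enter. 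Thus the mixed Gram pairing $\langle v^f(T),v^h(T)\rangle$ for all such $(f,h)$ is computable from the hypothesis.

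By Tataru's sharp unique continuation theorem, for $T$ sufficiently large the reachable set $\mathcal{R}_p(T):=\{v^g(T):g\in C_0^\infty(\Gamma_p\times(0,T))\}$ is dense in $L^2(M)$. For a source $f$ supported in $\Gamma_1$, the mixed Gram data provide the continuous linear functional $u\mapsto\langle v^f(T),u\rangle$ on both dense subspaces $\mathcal{R}_2(T)$ and $\mathcal{R}_3(T)$. The inner product on $\mathcal{R}_2(T)$ alone would require the unknown $\Lambda_{\Gamma_2,\Gamma_2}$; the role of the third set is precisely to supply the extra pairings $\langle\mathcal{R}_2(T),\mathcal{R}_3(T)\rangle$, with which one assembles an abstract Hilbert space of boundary-generated states whose inner product is intrinsically defined by the Blagovescenskii pairing applied to cross data, pinning down $v^f(T)$ as an element of this abstract space. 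From the state $v^f$ viewed intrinsically one reads off $\partial_\nu v^f|_{\Gamma_1\times\R_+}=\Lambda_{\Gamma_1,\Gamma_1}f$, and the classical result concludes.

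The main obstacle is this triangulation step: although each $\mathcal{R}_p(T)$ is dense in $L^2(M)$, no diagonal DN map is directly available, so constructing an intrinsic inner product on the reachable family is delicate. The three-set hypothesis furnishes exactly the cross pairings needed to compare the abstract structures built on $\Gamma_2$-sources and on $\Gamma_3$-sources; making the resulting identification rigorous---without presupposing the target manifold---is the principal technical step.
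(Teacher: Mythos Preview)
Your approach is genuinely different from the paper's, and the difference is instructive. The paper proves Theorem~\ref{thm:combining_data_infinite_time} purely spectrally: from $\Lambda_{\Gamma_p,\Gamma_q}$ it extracts the eigenvalues $\lambda_j$ and the rank-one-type operators $L_{\Gamma_p,\Gamma_q;j}f=\sum_{k\in I_j}(f,\partial_\nu\phi_k)\,\partial_\nu\phi_k|_{\Gamma_q}$ via residues of the meromorphically continued Fourier transform, and then runs a linear-algebra argument (matrices $A,B,\tilde B,C$ with $A^TB=A^TC=\tilde B^TC=I$, simplicity of $\lambda_0$) to recover the boundary spectral data on $\Gamma_2$ up to a single constant. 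No time-domain inner products, no Blagovestchenskii identity, and no controllability enter. The time-domain route you sketch is exactly what the paper uses for Theorem~\ref{thm:combining_data_finite_time}, where it is made to work \emph{only} under the explicit exact-controllability hypothesis~(A).

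That is also where your proposal has a genuine gap. From the cross operators you can indeed compute $(v^f(t_0),v^h(t_0))_{L^2(M)}$ whenever $f$ and $h$ are supported in different $\Gamma_p$'s. To obtain the diagonal quantity $\|v^f(t_0)\|^2$ for $f$ supported in $\Gamma_2$, the paper (in the proof of Theorem~\ref{thm:combining_data_finite_time}) approximates $v^f(t_0)$ by $v^{\tilde f_j}(t_0)$ with $\tilde f_j$ supported in $\Gamma_3$, checking from the data the weak condition (D1) that $(v^f-v^{\tilde f_j},v^h)\to 0$ for all $\Gamma_1$-sources $h$, \emph{together with} the boundedness condition (D2) on $\tilde f_j$ in $L^2$. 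Condition (D2) is essential: (D1) alone does not force $(v^{\tilde f_j}(t_0),v^f(t_0))\to\|v^f(t_0)\|^2$, because a sequence can pass the weak test against a dense set while its norms blow up. The existence of a sequence satisfying both (D1) and (D2) is precisely what exact controllability from $\Gamma_3$ provides, and exact controllability from an arbitrary open $\Gamma_3\subset\partial M$ can fail for every finite $T$ on a general $(M,g)$. Tataru gives you density of the reachable set, not bounded right inverses; your ``triangulation'' step therefore does not go through in the generality of Theorem~\ref{thm:combining_data_infinite_time}.

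There is a second, smaller gap at the end: even granting all diagonal inner products $(v^f(t_0),v^h(t_0))$ for $f,h$ supported in $\Gamma_1$, the sentence ``from the state $v^f$ viewed intrinsically one reads off $\partial_\nu v^f|_{\Gamma_1}$'' is not a step one can perform without already knowing $(M,g)$. In the paper's finite-time argument these inner products are fed into a time-continuation lemma to extend the \emph{cross} operators $\Lambda_{\Gamma_p,\Gamma_q}^T$ to arbitrarily large $T$, after which one invokes the spectral Theorem~\ref{thm:combining_data_infinite_time}; the diagonal operator $\Lambda_{\Gamma_1,\Gamma_1}$ is never reconstructed directly.
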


For measurements on a finite time interval, we prove a 
theorem similar to Theorem \ref{thm:combining_data_infinite_time} 
under an additional controllability assumption:
\begin{itemize}
\item[(A)] For any $w \in L^2(M)$ there is 
a boundary value $f \in L^2(\pd M \times (0,\infty))$ satisfying
\begin{equation*}
v^f(T/2) = w, \quad \supp(f) \subset \Gamma_3 \times (0,T),
\end{equation*}
where $v^f$ is the solution of the equation 
(\ref{def:wave_eq}) and $\Gamma_3 \subset \p M$.
\end{itemize}

Let us comment the controllability assumption (A) when $M$ is embedded in $\R^n$.
Property (A) follows from the geometric control condition of 
Bardos, Lebeau and Rauch \cite{BLR}, 
which yields exact controllability of the wave equation.
Property (A) follows also 
from existence of a strictly convex function $h$ on $\bar M$
with respect to the Riemannian metric $g$. 

Suppose that $h \in C^2(\bar M)$ is strictly convex and that 
$\rho > 0$ is a lower bound for the Hessian of $h$ 
in the Riemannian metric $g$, that is
\begin{equation*}
D^2 h(X, X) \ge \rho |X|_g, \quad X \in T_x M,\ x \in M.
\end{equation*}
By \cite{LTY}, (A) holds if 
\begin{equation*}
T > \frac{4}{\rho} \max_{x \in \bar M} |\nabla_g h|_g, \quad
\sup_{x \in \Gamma_3} \nabla_g h(x) \cdot \bar \nu(x) \le 0,
\end{equation*}
where $\nabla_g$ and $|\cdot|_g$ are the gradient and length 
with respect to the Riemannian metric $g$,
$\bar \nu$ is the Euclidean unit outward normal to $\p M \subset \R^n$ 
and $\nabla_g h \cdot \bar \nu$ is the Euclidean inner product.
We refer to \cite{LTY} for examples of Riemannian manifolds $(M, g)$ 
having a strictly convex function $h$.

%

\begin{theorem}
\label{thm:combining_data_finite_time}
Let $\Gamma_1, \Gamma_2, \Gamma_3 \subset \p M$ be open and nonempty.
If the controllability assumption (A) holds for 
$\Gamma_3$ and $T > 0$, and 
\begin{equation*}
4 d(x,y) < T, \quad x \in \Gamma_p,\ y \in M,\ p = 1,2,3,
\end{equation*}
then the Riemannian manifolds $(\Gamma_p, g|_{\Gamma_p})$, $p = 1,2,3$, and the operators
\begin{equation*}
\Lambda_{\Gamma_1, \Gamma_2}^T,\  
\Lambda_{\Gamma_1, \Gamma_3}^T,\  
\Lambda_{\Gamma_2, \Gamma_3}^T  
\end{equation*}
determine the manifold $(M, g)$ up to an isometry.
\end{theorem}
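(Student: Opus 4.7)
The strategy is to reconstruct, from the finite-time cross operators together with the controllability assumption (A), enough Gram-matrix information about interior wave states to then run a boundary control reconstruction. The main tools are a Blagoveshchenskii-type identity tailored to disjoint source and receiver sets, combined with an exact-controllability focusing argument via $\Gamma_3$.

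\textbf{Step 1: Cross Blagoveshchenskii identity.} For $j \neq k$ and $f \in C_0^\infty(\Gamma_j \times (0,T))$, $h \in C_0^\infty(\Gamma_k \times (0,T))$, the disjointness $\Gamma_j \cap \Gamma_k = \emptyset$ collapses the boundary integrals in the standard reciprocity identity for the wave equation to
\[
\int_0^t \!\! \int_{\Gamma_k} (\Lambda_{\Gamma_j, \Gamma_k}^T f)\, h \, dS\, d\tau \;-\; \int_0^t \!\! \int_{\Gamma_j} f \, (\Lambda_{\Gamma_k, \Gamma_j}^T h) \, dS \, d\tau,
\]
which is known from the data. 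Manipulating this as in the classical Blagoveshchenskii formula extracts $\langle v^f(t), v^h(t) \rangle_{L^2(M)}$ for $t \le T/2$; the hypothesis $4d(x,y) < T$ guarantees the time window is wide enough to isolate the inner product without contamination from uncontrolled boundary contributions at the endpoint.

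\textbf{Step 2: Self-pair data via focusing and boundary control.} By (A), for every $w \in L^2(M)$ there is $f_3$ supported in $\Gamma_3 \times (0,T)$ with $v^{f_3}(T/2) = w$. Applying Step~1 to $(\Gamma_3, \Gamma_j)$ for $j \in \{1,2\}$ delivers $\langle w, v^h(T/2) \rangle$ for every admissible $h$ on $\Gamma_j$. Choosing $w = v^{h'}(T/2)$ for $h'$ on any $\Gamma_{j'}$---including $j' = j$, where the $\Gamma_3$-realization of $w$ is performed abstractly in the controllability quotient $L^2(\Gamma_3 \times (0,T))/\ker$---recovers all Gram matrices $\{\langle v^h(T/2), v^{h'}(T/2) \rangle\}$. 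These Gram matrices are exactly the input the boundary control method needs to compute projections onto domains of influence of boundary subsets; because $4d(x,y)<T$, these domains exhaust $M$, and together with the given Riemannian structures $(\Gamma_p, g|_{\Gamma_p})$ they determine $(M,g)$ up to isometry, along the same lines as in the proof of Theorem~\ref{thm:combining_data_infinite_time}.

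\textbf{Main obstacle.} The delicate point is the abstract focusing in Step~2: when the desired $w = v^{h'}(T/2)$ arises from the same boundary piece as one of the test waves, the $\Gamma_3$-control realizing $w$ is known only through the controllability quotient, so the Gram matrix entries must be computed via this abstract identification rather than pointwise on $L^2(M)$. Carefully threading this abstract construction through the subsequent boundary control reconstruction, so that no undefined geometric quantity is ever evaluated, is the main technical burden; the role of the time bound $4d(x,y)<T$ is to keep every step within the finite horizon on which the cross operators are defined.
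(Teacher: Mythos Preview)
Your Steps 1--2 capture the same opening moves as the paper: use a Blagoveshchenskii-type identity to obtain the cross inner products $(v^f(T/2), v^h(T/2))$ for $f,h$ supported on distinct $\Gamma_p,\Gamma_q$, then exploit the controllability assumption (A) on $\Gamma_3$ to recover the diagonal (self-pair) inner products. The paper makes your ``abstract focusing'' concrete by characterising approximating sequences $(\tilde f_j)\subset C_0^\infty(\Gamma_3\times(0,T))$ through two conditions that are verifiable from the data: weak convergence of $v^{\tilde f_j}(T/2)-v^f(T/2)$ tested against all waves launched from $\Gamma_1$ (checkable via cross inner products, using density from $T/2>2\max d$), together with $L^2$-boundedness of $(\tilde f_j)$. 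This is precisely the mechanism that resolves the obstacle you flag in your last paragraph.

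Where you diverge from the paper is in the endgame. You propose to feed the Gram matrices directly into a domain-of-influence boundary control reconstruction. The paper instead uses the inner products to perform \emph{time continuation} of the Dirichlet-to-Neumann operators (Lemma~\ref{lem:continuation_of_data}): knowing $(v^f(T/2),v^h(T/2))$ for all $f,h$ on each $\Gamma_p$ allows one to extend $\Lambda_{\Gamma_p,\Gamma_q}^T$ to $\Lambda_{\Gamma_p,\Gamma_q}^{T+\delta}$, iterate to reach arbitrarily large times, and then invoke the infinite-time Theorem~\ref{thm:combining_data_infinite_time}. Your closing appeal to ``the same lines as in the proof of Theorem~\ref{thm:combining_data_infinite_time}'' is therefore misplaced: that proof proceeds via spectral data (poles and residues of $\mathcal F_{t\to\tau}\Lambda$, the operators $L_{\Gamma_p,\Gamma_q;j}$), not via Gram matrices and domains of influence. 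Your direct route is plausible in principle, but it would require writing out a standalone reconstruction argument rather than citing Theorem~\ref{thm:combining_data_infinite_time}; the paper's time-continuation route buys a clean reduction to the already-proved infinite-time case.
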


The proofs of these theorems consist of showing that the data 
determine, up to a gauge transformation, the boundary spectral data 
of the operator $A$ on a part of the boundary.
The manifold is then determined up to an isometry,
as can be seen using the boundary control method \cite{Be1, BeKu, KaKu, KKL}.

Notice also, that the operator $\Lambda_{\Gamma_1, \Gamma_2}^T$ determines the operator
$\Lambda_{\Gamma_2, \Gamma_1}^T$ by a time reversal argument.

\begin{figure}
\begin{center}
\includegraphics[width=0.4\textwidth]{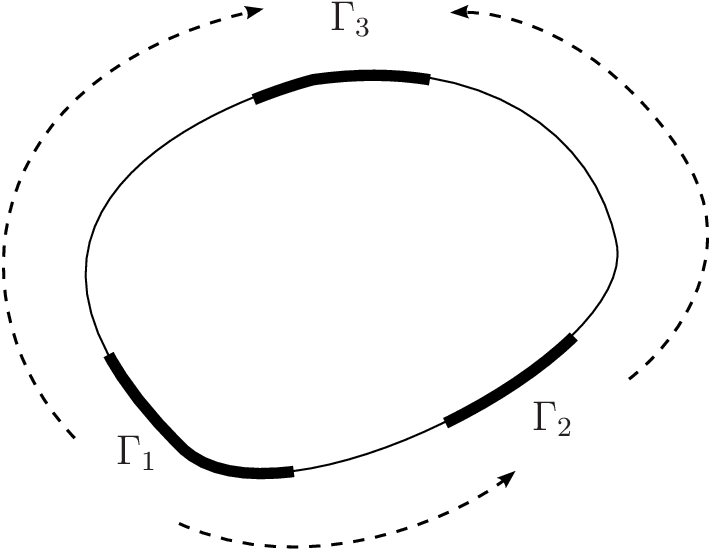}
\hspace{0.05\textwidth}
\includegraphics[width=0.4\textwidth]{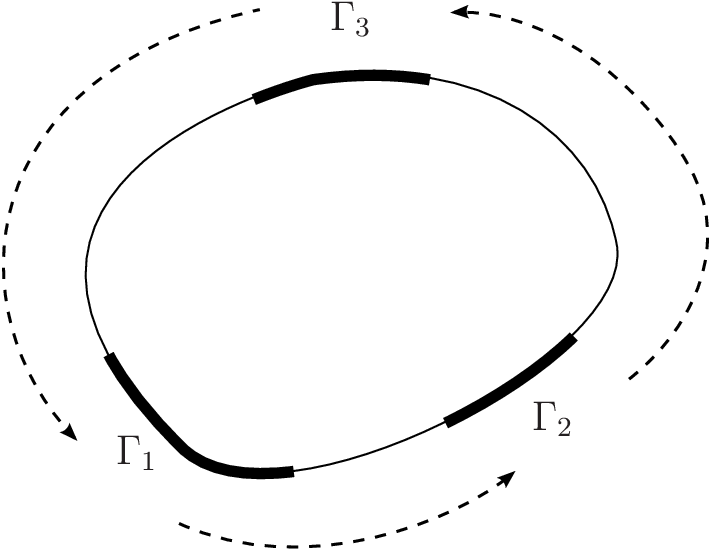}
\end{center}
\caption{On left, the measurements 
$\Lambda_{\Gamma_1, \Gamma_2},\  
\Lambda_{\Gamma_1, \Gamma_3},\  
\Lambda_{\Gamma_2, \Gamma_3}$ 
are shown as arrows pointing from the support of sources to the support of observations.
Using Lemma \ref{lem:symmetrization_of_data} we can change the direction of 
any arrow in the picture on left. Hence also the measurements shown on right 
are covered by Theorems \ref{thm:combining_data_infinite_time}
and \ref{thm:combining_data_finite_time}.
}
\end{figure}

\begin{lemma}
\label{lem:symmetrization_of_data}
Let $\Gamma_1, \Gamma_2 \subset \p M$ be open and nonempty.
Let $T > 0$, and define the time reversal operator $Rf(x,t) := f(x, T-t)$.
If $f \in C_0^\infty(\Gamma_1 \times (0,T))$ and $h \in C_0^\infty(\Gamma_2 \times (0,T))$, then
\begin{equation*}
(f, \Lambda_{\Gamma_2, \Gamma_1}^T h)_{L^2(\p M \times (0,T))} 
= (R \Lambda_{\Gamma_1, \Gamma_2}^T R f, h)_{L^2(\p M \times (0,T))}.
\end{equation*}
Hence $(\Gamma_j, g|_{\Gamma_j})$, $j = 1, 2$, given as Riemannian manifolds, 
and the operator $\Lambda_{\Gamma_1, \Gamma_2}^T$ determine the operator
$\Lambda_{\Gamma_2, \Gamma_1}^T = (R \Lambda_{\Gamma_1, \Gamma_2}^T R)^t$.
\end{lemma}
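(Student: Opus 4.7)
The plan is to derive the identity by a standard Green's identity argument in space--time, using the self-adjointness of $a(x,D)$, combined with a time reversal trick to convert a solution with zero initial data into one with zero terminal data so that all temporal boundary terms vanish.

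First I would introduce two auxiliary solutions. Let $u := v^h$ solve (\ref{def:wave_eq}) with Dirichlet data $h$, so that $\pd_\nu u|_{\p M\times(0,\infty)} = \Lambda h$, and let $w := v^{Rf}$ solve (\ref{def:wave_eq}) with Dirichlet data $Rf$, so that $\pd_\nu w = \Lambda (Rf)$. Define the time-reversed function $\tilde W(x,t) := w(x,T-t)$. Because (\ref{def:wave_eq}) is second-order in $t$ and the coefficients are $t$-independent, $\tilde W$ still solves $(\p_t^2+a(x,D))\tilde W=0$; it carries boundary data $\tilde W|_{\p M}(x,t) = (Rf)(x,T-t) = f(x,t)$, the normal derivative $\pd_\nu\tilde W(x,t) = (R\Lambda R f)(x,t)$, and the terminal conditions $\tilde W|_{t=T} = \pd_t\tilde W|_{t=T}=0$.

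Next I would apply Green's identity in space--time to the pair $(u,\tilde W)$ on $M\times(0,T)$. Using $\p_t^2 u = -a(x,D)u$ and $\p_t^2 \tilde W = -a(x,D)\tilde W$, one gets
\begin{equation*}
\int_0^T\!\!\int_M \bigl[(\p_t^2 u)\tilde W - u(\p_t^2\tilde W)\bigr]\,dV_g\,dt = -\int_0^T\!\!\int_M \bigl[(a u)\tilde W - u(a\tilde W)\bigr]\,dV_g\,dt.
\end{equation*}
Integrating by parts twice in $t$ on the left produces the boundary term $[\pd_t u\cdot\tilde W - u\cdot\pd_t\tilde W]_{t=0}^{t=T}$, which vanishes since $u$ has zero initial data at $t=0$ and $\tilde W$ has zero terminal data at $t=T$. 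On the right, the symmetry of $a(x,D) = -\Delta_g + q$ together with the standard Green's formula converts the volume integral into the boundary term $\int_0^T\int_{\p M}(\tilde W\,\pd_\nu u - u\,\pd_\nu \tilde W)\,dS_g\,dt$. Substituting the boundary traces computed above yields
\begin{equation*}
\int_0^T\!\!\int_{\p M} \bigl[f\cdot \Lambda h - h\cdot R\Lambda R f\bigr]\,dS_g\,dt = 0.
\end{equation*}

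Finally, I would restrict the supports. Since $f$ is supported in $\Gamma_1\times(0,T)$ and $h$ in $\Gamma_2\times(0,T)$, and $R$ preserves these supports, the first integrand equals $f\cdot\Lambda_{\Gamma_2,\Gamma_1}^T h$ and the second equals $h\cdot R\Lambda_{\Gamma_1,\Gamma_2}^T R f$. Rearranging and using that $R$ is an $L^2$-isometry gives the stated identity. The second claim then follows: given $(\Gamma_j,g|_{\Gamma_j})$ (needed so that the surface measure $dS_g$ on $\p M$ is available on $\Gamma_1\cup\Gamma_2$ for pairings) and $\Lambda_{\Gamma_1,\Gamma_2}^T$, one can compute the bilinear form $(R\Lambda_{\Gamma_1,\Gamma_2}^T R f,h)$ for all admissible $f,h$, which by the identity recovers $\Lambda_{\Gamma_2,\Gamma_1}^T$ as the transpose $(R\Lambda_{\Gamma_1,\Gamma_2}^T R)^t$. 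The main thing to watch is the bookkeeping of the time reversal together with the normal-derivative traces, and ensuring that the Green's identity yields no hidden boundary contribution from $\p M\setminus(\Gamma_1\cup\Gamma_2)$ — which is guaranteed by the supports of $f$ and $h$ — but otherwise the argument is a direct manipulation.
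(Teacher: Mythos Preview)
Your argument is correct and follows essentially the same route as the paper's proof: define the time-reversed solution $\tilde W(x,t)=v^{Rf}(x,T-t)$ (the paper calls this $u$), use that it has zero terminal data while $v^h$ has zero initial data, and apply Green's formula in space together with integration by parts in time so that all temporal boundary contributions vanish. The only cosmetic difference is that the paper writes the computation as a single chain of equalities starting from the difference of the two bilinear forms, whereas you set up the space--time Green identity first and then read off the boundary terms; the substance is identical.
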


This result is relatively well known, see e.g. \cite{BKLS, DKL},
but for the sake of completeness, we will give a proof in the appendix.

Let us review previous results on the topic.
The inverse problem for isotropic wave equation on a compact manifold with measurements
on the whole boundary was solved by Belishev and Kurylev \cite{BeKu}.
This was based on the boundary control method originally developed in \cite{Be1} 
for wave equation on a bounded domain of $\R^n$. 
The inverse problems for more general hyperbolic equations on a compact Riemannian manifold
with sources and observations on the same open subset $\Gamma$ of the boundary
has been studied by Katchalov and Kurylev \cite{KaKu}, see also \cite{KuLa}.
Similar problem has recently been studied for non-compact manifolds in \cite{IsKuLa,KaKuLa_conf}.

Inverse problems for elliptic equations with data on a part of the boundary have
been studied intensively as they are the natural generalization of the Calder\'on's
inverse problem for the conductivity equation \cite{Calderon}.

When measurements are given on the whole boundary, 
the inverse problem for Schr\"odinger equation on a bounded domain of $\R^n$, $n\geq 3$,
and hence for isotropic conductivity equation,
was solved by Sylvester and Uhlmann in \cite{SU}.
The corresponding two dimensional problem for isotropic conductivity equation 
was solved first by Nachman in \cite{Na} for $C^2$ conductivities, and
for $L^\infty$ conductivities, for
which Calder\'on's inverse problem was originally posed,
by Astala and P\"aiv\"arinta in \cite{AP}.
Recently, also the inverse problem for Schr\"odinger equation
on a bounded domain of dimension two with measurements on the whole boundary 
was solved by Bukgheim in \cite{Bu}.
The corresponding problem on a compact Riemannian surface was later solved in \cite{GT2}.

The inverse problem for Schr\"odinger equation
on a bounded domain of $\R^n$, $n\geq 3$, with observations on an open subset 
$\Gamma$ of the boundary was solved in \cite{KSU}.
The inverse problem for Schr\"odinger equation
on a bounded domain of $\R^n$, $n = 2$, with sources and observations on the same open subset 
$\Gamma$ of the boundary was solved by Imanuvilov, Uhlmann and Yamamoto in \cite{IUY}.
The corresponding problem on a compact Riemannian surface was later solved in \cite{GT}.
For related results with measurements on a part of the boundary
, see \cite{BuU,Greenleaf,Isakov}.

The inverse problem for the Laplace-Beltrami operator $\Delta_g$ on a compact Riemannian manifold
with sources and observations on the same open subset 
$\Gamma$ of the boundary 
has been studied on analytic Riemannian manifolds of dimension $n \ge 3$
in \cite{LeU,LTU}, and on Riemannian surfaces in \cite{LU}, see also
\cite{Khenkin1,Khenkin2}.
The inverse problem for the Laplace-Beltrami operator in 
dimensions $n \ge 3$ is open in general, even when measurements are given on the whole boundary.
For positive results under certain geometrical conditions see \cite{DsfKSU}.

\section{Spectral analysis of the data}

Denote by $(\lambda_j)_{j \in \N}$ the increasing sequence of distinct eigenvalues 
of the operator $A$
and let $(\phi_k)_{k \in \N}$ be an orthonormal basis of real-valued 
$C^\infty$-smooth eigenfunctions.
Moreover, let $(I_j)_{j \in \N}$ be a partition of $\N$ such that 
$(\phi_k)_{k \in I_j}$ is a basis for the space of eigenfunctions
corresponding $\lambda_j$.

Let $f \in C_c^\infty(\p M \times (0, \infty))$, and
consider $\Lambda f$ also as a function in $C^\infty(\p M \times \R)$ by 
defining $\Lambda f(\cdot,t) = 0$ for $t \le 0$.
There is a constant $C > 0$ such that for $x \in \p M$, 
the Fourier transform $\F_{t \to \tau} \Lambda f(x)$ 
is an analytic function of $\tau$ when $\Im \tau < -C$.
It is known (see e.g. \cite{KKL}), that 
$\F_{t \to \tau} \Lambda f(x)$ extends to a meromorphic function of $\tau \in \C$,
and that it may have poles only at points $\sqrt{\lambda_j}$.
Moreover, the residues at these points are
\begin{equation*}
\res_{\tau = \sqrt{\lambda_j}}
 \F_{t \to \tau} \Lambda f (x)
= \sum_{k \in I_j} \l(\hat f(\cdot, \sqrt{\lambda_j}), \p_\nu \varphi_k \r)_{L^2(\p M, dS_g)} 
    \p_\nu \varphi_k (x),
\end{equation*}
where $\hat f(x,\tau) = (\F_{t \to \tau}f)(x, \tau)$ and $dS_g$ is the Riemannian surface measure.

If $j \in \N$, $a_k$ are constants for $k \in I_j$, and the linear combination
\begin{equation*}
\sum_{k \in I_j} a_k \pd_\nu \phi_k = \pd_\nu \l( \sum_{k \in I_j} a_k \phi_k \r)
\end{equation*}
vanish on a nonempty open subset of $\pd M$, then
$a_k = 0$ for all $k \in I_j$ by unique continuation,
see e.g. \cite{Leis}.
Hence for an open nonempty set $\Gamma \subset \p M$ and $j \in \N$, 
the functions $(\pd_\nu \phi_k|_{\Gamma})_{k \in I_j}$ are linearly independent.

Let $\Gamma_1, \Gamma_2 \subset \p M$ be open and nonempty.
By linear independence 
and smoothness of $\p_\nu \phi_k$,
there are $f \in C_0^\infty(\Gamma_1)$ and $x \in \Gamma_2$
such that 
\begin{equation*}
\sum_{k \in I_j} (f, \p_\nu \varphi_k)_{L^2(\p M, dS_g)} \p_\nu \varphi_k (x)
\ne 0.
\end{equation*}
Moreover, for fixed $\tau \in \C$, the map $f \mapsto \hat f(\cdot, \tau)$ from 
$C_0^\infty(\Gamma_1 \times (0,\infty))$ to $C_0^\infty(\Gamma_1)$
is surjective. 

Hence the operator $\Lambda_{\Gamma_1, \Gamma_2}$
determines the eigenvalues $\lambda_j$
and the operators
\begin{eqnarray}
\label{def:mixed_eigenfunctions_operator}
L_{\Gamma_1, \Gamma_2; j} &: C_c^\infty(\Gamma_1 \times (0, \infty)) \to C^\infty(\Gamma_2 \times (0, \infty)),
\\\nonumber L_{\Gamma_1, \Gamma_2; j} f &:= \sum_{k \in I_j} (f, \p_\nu \varphi_k)_{L^2(\p M, dS_g)} \p_\nu \varphi_k|_{\Gamma_2}.
\end{eqnarray}


\section{Inverse problem with disjoint sources and observations}

In this section we prove Theorem \ref{thm:touching_main}.

\begin{lemma}
\label{lem:symmetry_for_derivatives_1d}
Suppose that $f,h \in C^\infty(\R)$ are such that
\begin{equation*}
\pd_x^{j} \pd_y^{k} (h(x) f(x) f(y))|_{x=0, y=0}
=
\pd_x^{k} \pd_y^{j} (h(x) f(x) f(y))|_{x=0, y=0}
\end{equation*}
for all $j,k \in \N$.
Then $\pd^j f(0) = 0$ for all $j \in \N := \{0, 1, 2, \dots\}$ or 
$\pd^k h(0) = 0$ for all positive $k \in \N$.
\end{lemma}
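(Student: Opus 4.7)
The plan is to translate the pointwise derivative identities into an identity in the ring of formal power series at $0$, and then to exploit the fact that this ring is an integral domain.

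First I would denote $a_j := \pd^j f(0)$ and $b_k := \pd^k h(0)$, and set $H_j := \pd_x^j(h(x)f(x))|_{x=0}$. Applying the Leibniz rule, $H_j = \sum_{m=0}^j \binom{j}{m} b_m a_{j-m}$. Since $\pd_y^k f(y)|_{y=0} = a_k$, the left-hand side of the hypothesis is $H_j a_k$ and the right-hand side is $H_k a_j$. Thus the hypothesis is equivalent to the symmetry relation
\begin{equation*}
H_j a_k = H_k a_j \quad \text{for all } j,k \in \N.
\end{equation*}

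Next I would dispatch the dichotomy. If $a_j = 0$ for every $j \in \N$, then $\pd^j f(0) = 0$ for all $j$, which is the first alternative of the conclusion. Otherwise, fix some $j_0$ with $a_{j_0} \ne 0$. Setting $k = j_0$ in the symmetry relation gives $H_j = c\, a_j$ for every $j \in \N$, where the constant $c := H_{j_0}/a_{j_0}$ is independent of $j$.

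Finally I would recast this as a statement about formal Taylor series. Form
\begin{equation*}
F(x) := \sum_{j=0}^\infty \frac{a_j}{j!} x^j, \qquad H(x) := \sum_{k=0}^\infty \frac{b_k}{k!} x^k \quad \in \R[[x]].
\end{equation*}
Then $H(x) F(x) = \sum_{j=0}^\infty \frac{H_j}{j!} x^j$, so the identity $H_j = c\, a_j$ reads $H(x)F(x) = c\, F(x)$ in $\R[[x]]$, i.e.\ $(H(x) - c) F(x) = 0$. Since $\R[[x]]$ is an integral domain and $F \ne 0$ (the coefficient $a_{j_0}$ is nonzero), we conclude $H(x) = c$, which means $b_0 = c$ and $b_k = 0$ for all $k \ge 1$, giving the second alternative.

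The one potentially delicate point is the step $(H - c)F = 0 \Rightarrow H = c$, but this is immediate from the integrality of $\R[[x]]$; everything else is a direct consequence of Leibniz's rule. I do not anticipate any real obstacle — the lemma is essentially a statement that, as a formal power series, $hf = cf$ forces $h$ to be constant whenever $f$ has a nonzero Taylor coefficient.
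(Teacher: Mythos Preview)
Your proof is correct, and it follows a genuinely different route from the paper's.  The paper argues by contradiction: assuming both alternatives fail, it picks the minimal $j$ with $\pd^j f(0)\ne 0$ and the minimal positive $k$ with $\pd^k h(0)\ne 0$, expands the single identity with indices $(j+k,j)$ via Leibniz, and checks term by term that everything cancels except $\binom{j+k}{k}\,\pd^k h(0)\,(\pd^j f(0))^2$, yielding a contradiction.  Your argument instead reformulates the full family of identities as $H_j a_k = H_k a_j$, observes this forces $H_j = c\,a_j$ whenever $F\ne 0$, and then reads this as $(H-c)F=0$ in $\R[[x]]$ to conclude by integrality.  The paper's argument is entirely elementary and self-contained; yours is more structural and makes transparent why the dichotomy holds --- it is really the statement that in $\R[[x]]$ one cannot have $HF=cF$ with $F\ne 0$ unless $H$ is the constant $c$.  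Both are short; neither has any gap.
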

\begin{proof}
Assume that the claim is not valid.
Then there exist $j \in \N$ and $k \in \N \setminus \{0\}$
such that $\pd^j f(0) \ne 0$ and $\pd^k h(0) \ne 0$.
Let us next consider the smallest such integers $j$ and $k$.

By Leibniz's formula
\begin{eqnarray*}
0 &= 
 \pd_x^{j+k} \pd_y^{j} (h(x) f(x) f(y))|_{x=0, y=0}
 - \pd_x^{j} \pd_y^{j+k} (h(x) f(x) f(y))|_{x=0, y=0}
\\&=
 \sum_{l=0}^{j+k} {j+k \choose l} \pd^l h(0) \pd^{j+k-l} f(0) \pd^j f(0) 
\\&\quad 
 - \sum_{m=0}^{j} {j \choose m} \pd^m h(0) \pd^{j-m} f(0) \pd^{j+k} f(0) 
\\&=
 S_1 
 + {j+k \choose k} \pd^k h(0) \pd^{j} f(0) \pd^j f(0)
 + S_2 - S_3,
\end{eqnarray*}
where 
\begin{eqnarray*}
S_1 &:= \sum_{l=1}^{k-1} {j+k \choose l} \pd^l h(0) \pd^{j+k-l} f(0) \pd^j f(0),
\\
S_2 &:= \sum_{l=k+1}^{j+k} {j+k \choose l} \pd^l h(0) \pd^{j+k-l} f(0) \pd^j f(0),
\\
S_3 &:= \sum_{m=1}^{j} {j \choose m} \pd^m h(0) \pd^{j-m} f(0) \pd^{j+k} f(0),
\end{eqnarray*}
and the terms with indices $l=0$ and $m=0$ have cancelled each other out.

As $k$ is the smallest positive integer such that $\pd^k h(0) \ne 0$,
we have $\pd^l h(0) = 0$ in the sum $S_1$, and so $S_1 = 0$.
As $j$ is the smallest integer such that $\pd^j f(0) \ne 0$,
we have $\pd^{j+k-l} f(0) = 0$ in the sum $S_2$ and
$\pd^{j-m} f(0) = 0$ in the sum $S_3$, thus $S_2 = S_3 = 0$.

Hence $\pd^k h(0) (\pd^{j} f(0))^2 = 0$, which is a contradiction
with the assumption that $\pd^j f(0) \ne 0$ and $\pd^k h(0) \ne 0$.
This proves the claim.
\end{proof}

In the proof of the next lemma we use the equation
\begin{equation}
\label{eqn:directed_derivative}
\pd_t^j f(tv) = \sum_{|\alpha| = j} \frac{j!}{\alpha!} \pd^\alpha f(tv) v^\alpha,
\end{equation}
where $f \in C^\infty(\R^n)$, $t \in \R$, $v \in \R^n$ and $j \in \N$,

\begin{lemma}
\label{lem:symmetry_for_derivatives}
Suppose that $f,h \in C^\infty(\R^n)$ are such that
\begin{equation*}
\pd_x^{\alpha} \pd_y^{\beta} (h(x) f(x) f(y))|_{x=0, y=0}
=
\pd_x^{\beta} \pd_y^{\alpha} (h(x) f(x) f(y))|_{x=0, y=0}
\end{equation*}
for all multi-indices $\alpha, \beta \in \N^n$.
Then $\pd^\alpha f(0) = 0$ for all multi-indices $\alpha \in \N^n$ or 
$\pd^\beta h(0) = 0$ for all nonzero multi-indices $\beta \in \N^n$.
\end{lemma}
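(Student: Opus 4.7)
The plan is to reduce this multi-variable symmetry statement to the one-dimensional Lemma \ref{lem:symmetry_for_derivatives_1d} by restricting $f$ and $h$ to lines through the origin, and then to promote the resulting pointwise dichotomy to a global one using a polynomial (or Baire-category) argument.

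For each fixed direction $v \in \R^n$, I would set $\tilde f(t) := f(tv)$ and $\tilde h(t) := h(tv)$, and verify the 1D symmetry hypothesis for the pair $(\tilde f, \tilde h)$. Using the chain rule identity (\ref{eqn:directed_derivative}) applied to the function $\Phi(x,y) := h(x) f(x) f(y)$, one computes
\begin{equation*}
\pd_s^{j} \pd_t^{k}\bigl(\tilde h(s)\tilde f(s)\tilde f(t)\bigr)\bigr|_{s=t=0}
= \sum_{|\alpha|=j,\,|\beta|=k} \frac{j!\,k!}{\alpha!\,\beta!}\, v^{\alpha+\beta}\, \pd_x^{\alpha}\pd_y^{\beta}\Phi(0,0).
\end{equation*}
Swapping the summation labels $\alpha \leftrightarrow \beta$ in the analogous expression for $\pd_s^{k}\pd_t^{j}$ and invoking the hypothesis $\pd_x^{\alpha}\pd_y^{\beta}\Phi(0,0) = \pd_x^{\beta}\pd_y^{\alpha}\Phi(0,0)$ shows that the two mixed derivatives coincide for every $j,k$. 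Thus Lemma \ref{lem:symmetry_for_derivatives_1d} applies to $(\tilde f,\tilde h)$, yielding for each $v$ the dichotomy: either $\pd^{j}\tilde f(0)=0$ for all $j\in\N$, or $\pd^{k}\tilde h(0)=0$ for every positive $k\in\N$.

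Introducing
\begin{equation*}
V_f := \{v\in\R^n : \pd_t^{j}\tilde f(0)=0 \text{ for all } j\in\N\},
\quad
V_h := \{v\in\R^n : \pd_t^{k}\tilde h(0)=0 \text{ for all } k\geq 1\},
\end{equation*}
the previous step gives $\R^n = V_f \cup V_h$. By (\ref{eqn:directed_derivative}) again, $\pd_t^{j}\tilde f(0)$ is the homogeneous polynomial $P_j^f(v) := \sum_{|\alpha|=j}\tfrac{j!}{\alpha!}\pd^{\alpha}f(0)\,v^{\alpha}$ in $v$, and similarly for $h$. Hence $V_f$ and $V_h$ are intersections of zero sets of homogeneous polynomials.

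The main (and only) obstacle is ruling out a nontrivial split between these two sets. I would argue by contradiction: if $V_f \neq \R^n$ then some $P_{j_0}^{f}\not\equiv 0$, and if $V_h \neq \R^n$ then some $P_{k_0}^{h}\not\equiv 0$; but then $V_f \cup V_h \subset \{v : P_{j_0}^{f}(v)\,P_{k_0}^{h}(v) = 0\}$, a proper algebraic subset of $\R^n$, contradicting $\R^n = V_f\cup V_h$. Therefore $V_f=\R^n$ or $V_h=\R^n$, which, since $P_j^f\equiv 0$ forces $\pd^{\alpha}f(0)=0$ for all $|\alpha|=j$ (and analogously for $h$), gives exactly the conclusion of the lemma.
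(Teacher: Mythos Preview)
Your proof is correct and follows essentially the same route as the paper: reduce to lines through the origin via (\ref{eqn:directed_derivative}), apply Lemma~\ref{lem:symmetry_for_derivatives_1d} direction by direction to obtain the covering $\R^n = V_f \cup V_h$, and then argue that one of these sets must be all of $\R^n$. The only cosmetic difference is in the last step: the paper observes that $V_f$ and $V_h$ are closed, so if $V_f \ne \R^n$ then $V_h$ contains a nonempty open set on which each homogeneous polynomial $P_k^h$ vanishes, forcing $P_k^h \equiv 0$; you instead note directly that if neither set is $\R^n$ then $V_f \cup V_h$ lies in the zero set of the nonzero polynomial $P_{j_0}^f \cdot P_{k_0}^h$, a contradiction. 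Both arguments exploit the same fact that a nonzero polynomial cannot vanish on an open subset of $\R^n$.
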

\begin{proof}
Let $j,k \in \N$ and $v \in \R^n$. By (\ref{eqn:directed_derivative})
\begin{eqnarray*}
&\pd_t^{j} \pd_s^{k} (h(tv) f(tv) f(sv))|_{t=0, s=0}
- \pd_t^{k} \pd_s^{j} (h(tv) f(tv) f(sv))|_{t=0, s=0}
\\&\quad= 
 \sum_{|\alpha|=j} \sum_{|\beta|=k} \frac{j!}{\alpha!} \frac{k!}{\beta!} v^\alpha v^\beta
   \left( \pd^\alpha (hf)(0) \pd^\beta f(0) - \pd^\beta (hf)(0) \pd^\alpha f(0) \right) = 0.
\end{eqnarray*}
Hence 
\begin{equation*}
\pd_t^{j} \pd_s^{k} (h(tv) f(tv) f(sv))|_{t=0, s=0}
=
\pd_t^{k} \pd_s^{j} (h(tv) f(tv) f(sv))|_{t=0, s=0}
\end{equation*}
for all $j,k \in \N$ and $v \in \R^n$.

Define the sets
\begin{eqnarray*}
F &:= \{ v \in \R^n : \pd_t^j f(tv)|_{t=0} = 0 \text{ for all $j \in \N$} \},
\\
H &:= \{ v \in \R^n : \pd_t^k h(tv)|_{t=0} = 0 \text{ for all positive $k \in \N$} \}.
\end{eqnarray*}
The sets $F$ and $H$ are closed by smoothness of $f$ and $h$, respectively.
Lemma \ref{lem:symmetry_for_derivatives_1d} gives that $F \cup H = \R^n$.
If $F \ne \R^n$, then $\R^n \setminus F$ is open, nonempty and contained in $H$.
Thus $F$ or $H$ contains an open nonempty subset.

Suppose that $U \subset F$ is open and nonempty.
Let $j \in \N$, and define the polynomial 
\begin{equation*}
p(v) := \sum_{|\alpha|=j} \frac{j!}{\alpha!} \pd^\alpha f(0) v^\alpha.
\end{equation*}
By (\ref{eqn:directed_derivative}), $p(v) = \pd_t^j f(tv)|_{t = 0}$,
and $p$ vanish in $U$.
Using unique continuation for real analytic functions we see that $p = 0$ in $\R^n$, and so 
the coefficients of $p$ vanish.
As $j$ can be chosen freely, $\pd^\alpha f(0) = 0$ for all multi-indices $\alpha$.

Similarly, if there exists an open and nonempty $V \subset H$, then 
$\pd^{\alpha} h(0) = 0$ for all nonzero multi-indices $\alpha$.
\end{proof}

\begin{remark}
\label{rem:derivatives_in_local_coordinates}
Let $U$ be a $C^\infty$-smooth manifold of dimension $n$,
$f \in C^\infty(U)$ and $p \in U$.
If $\pd^\alpha f(0) = 0$ for all multi-indices $\alpha \in \N^n$
in some local coordinates taking $p$ to $0$,
then $\pd^\alpha f(0) = 0$ for all multi-indices $\alpha \in \N^n$
in all local coordinates taking $p$ to $0$.
\end{remark}
%

\begin{lemma} 
\label{lem:unique_continuation}
Let $\phi$ be an eigenfunction of the operator A 
corresponding to an eigenvalue $\lambda$, and let $p_0 \in \pd M$.
Then in any local coordinates of $\p M$ taking $p_0$ to $0$,
there is a multi-index $\alpha \in \N^{n-1}$ such that
$\pd^\alpha \pd_\nu \phi(0) \ne 0$.
\end{lemma}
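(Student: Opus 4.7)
My plan is to argue by contradiction. Suppose that in some local coordinates of $\pd M$ near $p_0$ one has $\pd^\alpha \pd_\nu \phi(0) = 0$ for every multi-index $\alpha \in \N^{n-1}$. By Remark \ref{rem:derivatives_in_local_coordinates}, this assumption is coordinate invariant, so I may work in any convenient chart on $\pd M$. I would pass to boundary normal coordinates $(x', x^n)$ centered at $p_0$, with $x^n$ the geodesic distance to $\pd M$ and $x^n > 0$ inside $M$. In these coordinates the metric splits as $g = (dx^n)^2 + h_{\alpha\beta}(x)\, dx^\alpha dx^\beta$, and the eigenvalue equation $a(x,D)\phi = \lambda \phi$ can be solved for the second normal derivative in the form
\begin{equation*}
\pd_n^2 \phi = -c(x)\pd_n \phi - \Delta'_{x^n} \phi + (q-\lambda)\phi,
\end{equation*}
where $c = \tfrac{1}{2}\pd_n \log|h|$ is smooth and $\Delta'_{x^n}$ is a second-order differential operator acting only in the tangential variables.

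Then I would prove by induction on $k \in \N$ that $\pd_{x'}^\beta \pd_n^k \phi(0,0) = 0$ for every $\beta \in \N^{n-1}$. The case $k = 0$ is immediate from $\phi|_{\pd M} = 0$, and $k = 1$ is exactly the standing hypothesis on $\pd_\nu \phi$. For $k \ge 2$, I would apply $\pd_n^{k-2} \pd_{x'}^\beta$ to the displayed equation and evaluate at the origin: a Leibniz expansion of the right-hand side writes each term as a product of derivatives of the smooth coefficients (evaluated at $0$) with $\pd_{x'}^{\beta'} \pd_n^{k'} \phi(0,0)$ where $k' \le k-1$, and these all vanish by the inductive hypothesis. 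Hence $\phi$ is flat at $p_0$, meaning every mixed derivative of $\phi$ in these coordinates vanishes at the origin.

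The concluding step is to deduce $\phi \equiv 0$ on $M$ from this flatness and thereby contradict the fact that $\phi$ is a nonzero eigenfunction of $A$. For this I would invoke strong unique continuation for the elliptic operator $-\Delta_g + q - \lambda$ with Dirichlet boundary condition: an $H^2$ solution of the eigenvalue equation that is flat at a boundary point must vanish identically on the connected manifold $M$.

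The main obstacle is this final invocation. Interior strong unique continuation is classical (Aronszajn--Cordes and its Carleman-estimate refinements), but at a boundary point one needs either a Carleman estimate carried up to $\pd M$ that exploits the Dirichlet condition, or a careful reflection/extension of $(M, g, q)$ across $\pd M$ near $p_0$ that converts the boundary point into an interior one while preserving an elliptic inequality of the type covered by Aronszajn--Cordes. Everything preceding this step is a direct computation using the eigenvalue equation in boundary normal coordinates; the delicate point is entirely encapsulated in the boundary unique continuation statement.
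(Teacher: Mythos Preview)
Your proposal is correct and follows essentially the same route as the paper: contradiction, boundary normal coordinates, induction on the normal order to show flatness of $\phi$ at $p_0$, and then strong unique continuation. The paper resolves your ``main obstacle'' exactly via the second option you suggest: it takes the odd reflection of $\phi$ and suitable even/odd reflections of the coefficients across $\{x^n=0\}$, obtains an $H^2$ solution of an elliptic equation with Lipschitz principal part and bounded lower-order terms on a full neighborhood of the origin, checks that the flatness gives vanishing of infinite order in the $L^2$ sense, and then invokes H\"ormander's strong unique continuation theorem to conclude $\phi\equiv 0$.
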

\begin{proof}
Assume that the claim is not valid. Then $\pd^\alpha \pd_\nu \phi(0) = 0$
for all $\alpha \in \N^{n-1}$ in some local coordinates of $\p M$ taking $p_0$ to $0$.

Consider boundary normal coordinates of $\bar M$ taking $p_0$ to $0$.
We may suppose that the coordinates map a small neigborhood $V$ of $p_0$
onto $B(0, \epsilon) \times [0, \epsilon)$,
where $B(0, \epsilon) \subset \R^{n-1}$ is a ball of radius $\epsilon > 0$ centered at the origin.
Then these coordinates take a boundary point $p' \in \p M \cap V$ 
to a point $(x', 0) \in B(0, \epsilon) \times \{0\}$, where $x' = (x^1, \dots, x^{n-1})$.

The special property of the boundary normal coordinates is, that 
a point $p \in \bar M \cap V$ has coordinates
$(x', x^n) \in B(0, \epsilon) \times [0, \epsilon)$, where
$x^n = d(p, \p M)$ and $x'$ are the coordinates of the 
unique boundary point $p' \in \p M$ such that $d(p, p') = d(p, \p M)$.

Moreover, in the coordinates $(x', x^n)$ the equation 
\begin{equation*}
(- \Delta_g + q) \phi = \lambda \phi,
\end{equation*}
has the form
\begin{equation}
\label{eq:eigeneq_in_boundary_normal_coordinates}
-\pd_{x^n}^2 \phi - \sum_{j,k=1}^{n-1} g^{jk} \pd_{x^j} \pd_{x^k} \phi 
  + \sum_{j=1}^n a^j \pd_{x^j} \phi + a^0 \phi = \lambda \phi,
\end{equation}
for some $a^j \in C^\infty(B(0,\varepsilon) \times [0, \varepsilon))$, $j = 0, \dots, n$,
see e.g. \cite{chavel}.

Let us show, that $\phi = 0$.
Let $b \in \N$ and $\alpha \in \N^{n-1}$. 
By applying the operator $\pd_{x'}^\alpha \pd_{x^n}^b$ on the both sides of 
(\ref{eq:eigeneq_in_boundary_normal_coordinates}), we get
\begin{equation*}
\pd_{x'}^\alpha \pd_{x^n}^{b+2} \phi(0) = \pd_{x'}^\alpha \pd_{x^n}^b (
- \sum_{j,k=1}^{n-1} g^{jk} \pd_{x^j} \pd_{x^k} \phi 
  + \sum_{j=1}^n a^j \pd_{x^j} \phi + a^0 \phi - \lambda \phi)|_{x=0}.
\end{equation*}
The right hand side of this equation is a linear combination
of functions 
\begin{equation*}
\pd_{x'}^{\alpha'} \pd_{x^n}^{b'} \phi, \quad \alpha' \in \N, b' \le b + 1
\end{equation*}
at the point $x = 0$. 

The equations $\pd_{x'}^{\alpha} \phi(0) = 0$ hold for all $\alpha \in \N^{n-1}$ 
by the boundary condition $\phi|_{\p M} = 0$.
Furthermore, we have by Remark \ref{rem:derivatives_in_local_coordinates},
that $\pd_{x'}^\alpha \pd_{x^n} \phi(0) = 0$ for all $\alpha \in \N^{n-1}$.
Using induction we see that 
$\pd_{x'}^\alpha \pd_{x^n}^b \phi(0) = 0$ for all $\alpha \in \N^{n-1}$ and $b \in \N$.

Define the odd and even reflection operators 
\begin{equation*}
R_o f(x',x^n) := (\sign\ x^n) f(x',|x^n|), 
\quad R_e f(x',x^n) := f(x',|x^n|),
\end{equation*}
where $f \in C(B(0,\varepsilon) \times [0, \varepsilon))$, $x' \in B(0,\varepsilon)$
and $x^n \in (-\varepsilon, \varepsilon)$. 

Also, define $\tilde \phi := R_o \phi$, $\tilde g^{jk} := R_e g^{jk}$, 
$\tilde a^n := R_o a^n$ and $\tilde a^j := R_e a^j$ for $j = 0, \dots, n-1$.
Denote $U := B(0,\varepsilon) \times (-\varepsilon, \varepsilon)$.

As $\phi|_{x^n=0} = 0$, we see that $\tilde \phi \in H^2(U)$ and 
\begin{equation*}
\pd_{x'}^\alpha \pd_{x^n}^b \tilde \phi(x',x^n) = (\sign\ x^n)^{b+1} \pd_{x'}^\alpha \pd_{x^n}^b \phi(x',|x^n|), 
\quad |\alpha| + b \le 2.
\end{equation*}
Moreover, $\tilde g^{jk}$ is Lipschitz continuous in $U$,
$a^j \in L^\infty(U)$ for $j = 0, \dots, n$, and 
\begin{equation*}
-\pd_{x^n}^2 \tilde \phi - \sum_{j,k=1}^{n-1} \tilde g^{jk} \pd_{x^j} \pd_{x^k} \tilde \phi 
  + \sum_{j=1}^n \tilde a^j \pd_{x^j} \tilde \phi + \tilde a^0 \tilde \phi = \lambda \tilde \phi,
\end{equation*}
where the both sides are considered as functions in $L^2(U)$.
Hence for some constant $C > 0$
\begin{equation*}
|\pd_{x^n}^2 \tilde \phi + \sum_{j,k=1}^{n-1} \tilde g^{jk} \pd_{x^j} \pd_{x^k} \tilde \phi| 
\le C \sum_{|\alpha| + b \le 1} |\pd_{x'}^\alpha \pd_{x^n}^b \tilde \phi|,
\quad \text{in $U$}. 
\end{equation*}

Since $\phi \in C^\infty(B(0,\varepsilon) \times [0, \varepsilon))$ 
vanishes up to arbitrary degree in origin, Taylor's formula gives
for any $m \in \N$ a constant $C_m > 0$ such that
\begin{equation*}
\int_{B(0,r)} \int_0^r |\phi(x)|^2 dx^n dx' \le \int_{B(0,r)} \int_0^r C_m r^m  dx^n dx',
\quad \text{ as $r \to 0$.}
\end{equation*}
Hence for any $m \in \N$, there is a constant $C_m' > 0$ such that
\begin{equation*}
\int_{B(0,r)} \int_{-r}^r  |\tilde \phi(x)|^2 dx^n dx' \le C_m' r^m, 
\quad \text{ as $r \to 0$.}
\end{equation*}

By H\"ormander's strong unique continuation result \cite{hormander:uc} this yields, 
that $\tilde \phi = 0$ in $U$.
In particular, $\phi = 0$ around some point $q \in M$.
As $M$ is connected, unique continuation gives
that $\phi = 0$ in $M$. This is a contradiction with the assumption that
$\phi$ is an eigenfunction, and the claim is proved.
\end{proof}

\begin{remark}
\label{lem:spans}
Let $X$ and $Y$ be Hilbert spaces, and 
let $u_1, \dots, u_N \in X$ and $v_1, \dots, v_N \in Y$ be linearly independent. 
Suppose that $D \subset X$ is a dense subspace, and define
\begin{equation*}
L : D \to Y, \quad Lf := \sum_{k=1}^N (f, u_k)_X v_k.
\end{equation*}
Then $L$ determines the unique bounded extension $\tilde L : X \to Y$,
and its adjoint $\tilde L^* : Y \to X$.
Hence, $L$ determines the spaces
\begin{equation*}
\linspan(v_1, \dots, v_N) = \tilde L(X), \quad \linspan(u_1, \dots, u_N) = \tilde L^*(Y).
\end{equation*}
\end{remark}
 
%
%

\begin{theorem} 
\label{thm:boudary_data_to_spectral_data}
Let $\Gamma_1, \Gamma_2, \Sigma \subset \p M$ be open, 
$\bar \Gamma_1, \bar \Gamma_2 \subset \Sigma$, and
$\overline \Gamma_1 \cap \overline \Gamma_2 \ne \emptyset$. 
Then the smooth manifold $\Sigma$ and the collection
\begin{equation}
\label{eqn:data_of_thm_boudary_data_to_spectral_data}
\{ (\lambda_j, L_{\Gamma_1, \Gamma_2; j})\ |\ j \in \N \}
\end{equation}
determine boundary spectral data up to a constant gauge transformation on $\Gamma_2$.
That is, one can find a collection
\begin{equation}
\label{eq:boundary_spectral_data}
\{ (\lambda_j, (\pd_\nu \psi_k|_{\Gamma_2})_{k \in I_j})\ |\ j \in \N \},
\end{equation}
where for an unknown constant $C > 0$ not depending on $j$ or $k$,
$(C\psi_k)_{k \in I_j}$ is an orthonormal basis of eigenfunctions in $L^2(M)$
corresponding the eigenvalue $\lambda_j$.
\end{theorem}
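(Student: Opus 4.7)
My plan is to reconstruct the spectral data from the Schwartz kernel of $L_{\Gamma_1,\Gamma_2;j}$ via its Taylor expansion at the touching point $p_0 \in \overline\Gamma_1 \cap \overline\Gamma_2$. First, by Remark~\ref{lem:spans}, $L_{\Gamma_1,\Gamma_2;j}$ determines $V_j := \linspan\{\pd_\nu \phi_k|_{\Gamma_2} : k\in I_j\}$ inside $C^\infty(\Gamma_2)$. Fix local coordinates on $\Sigma$ centered at $p_0$. In these coordinates the Schwartz kernel of $L_{\Gamma_1,\Gamma_2;j}$ with respect to Lebesgue measure is $K_j(x,y) = \mu(y)\sum_{k\in I_j}\pd_\nu \phi_k(x)\,\pd_\nu \phi_k(y)$ for the unknown density $\mu=|g|^{1/2}$. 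Since $\mu$ and every $\pd_\nu \phi_k$ are smooth on $\Sigma$, $K_j$ extends smoothly to a neighborhood of $(p_0, p_0)$ in $\Sigma\times \Sigma$, and so every mixed Taylor coefficient $K_j^{\alpha,\beta} := \pd_x^\alpha \pd_y^\beta K_j(0,0)$ is computable from the data.

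The main step is to recover the Taylor expansion of $\mu$ at $p_0$ up to a single multiplicative constant uniform in $j$. The key observation is that the symmetrized kernel $F_j(x,y):=K_j(x,y)/\mu(y) = \sum_k \pd_\nu\phi_k(x)\pd_\nu\phi_k(y)$ is symmetric in $(x,y)$, so its Taylor coefficients at $(0,0)$ are symmetric under the swap $\alpha\leftrightarrow\beta$. Expanding this condition via Leibniz's rule in terms of the known numbers $K_j^{\alpha,\beta}$ and the unknown Taylor coefficients $L^\gamma := \pd^\gamma(1/\mu)(0)$ gives a linear system for the $L^\gamma$. The claim is that its only solutions are scalar multiples of the true sequence. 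Introducing the formal power series $F_k := (L\mu)\,\pd_\nu\phi_k$ at $0$, the symmetry condition becomes $\sum_k f_k^\alpha F_k^\beta = \sum_k f_k^\beta F_k^\alpha$ for all multi-indices $\alpha,\beta$, where $f_k^\alpha := \pd^\alpha(\pd_\nu \phi_k)(0)$.

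Lemma~\ref{lem:symmetry_for_derivatives} is the main technical tool here. For a simple eigenvalue $|I_j|=1$, the condition is precisely the hypothesis of the lemma with $h=L\mu$ and $f=\pd_\nu\phi_k$; it forces either $\pd_\nu \phi_k$ to be Taylor-flat at $p_0$ (ruled out by Lemma~\ref{lem:unique_continuation}) or $L\mu$ to be Taylor-constant at $0$. The latter gives $L = \lambda_j/\mu$ for some scalar $\lambda_j$. For higher multiplicities the same conclusion follows by reducing to the rank-one situation through linear combinations, using that the Taylor expansions of the $\pd_\nu\phi_k$ at $p_0$ are linearly independent (again Lemma~\ref{lem:unique_continuation}). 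Since $\mu$ is the same function for every $j$, matching the reconstructions across $j$ synchronizes all the $\lambda_j$ to one common constant $\lambda$; setting $C := \sqrt{\lambda}$ identifies the gauge of the theorem.

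With $\mu$ pinned down at $p_0$ up to the factor $C^2$, the symmetric kernel $F_j$ is known up to a factor of $C^{-2}$, and its rank-$|I_j|$ factorization yields the Taylor coefficients $\pd^\alpha(\pd_\nu\phi_k)(p_0)/C$ up to an orthogonal transformation within the eigenspace indexed by $I_j$. Lemma~\ref{lem:unique_continuation} ensures that the Taylor map $V_j \to \R^{\N^{n-1}}$, $\eta \mapsto (\pd^\alpha \eta(p_0))_\alpha$, is injective; these Taylor expansions therefore uniquely determine elements $\pd_\nu\psi_k|_{\Gamma_2}\in V_j$ such that $(C\psi_k)_{k\in I_j}$ is an orthonormal basis of the $\lambda_j$-eigenspace, as required. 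The main obstacle is the higher-multiplicity reduction: matching the matrix-valued ambiguity of the rank factorization against the symmetry constraint to recover the same scalar conclusion as in the simple-eigenvalue case.
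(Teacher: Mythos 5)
Your overall strategy is essentially the paper's: exploit the Taylor-coefficient symmetry of the kernel $\sum_{k\in I_j}\pd_\nu\phi_k(x)\,\pd_\nu\phi_k(y)$ at the touching point, use Lemma \ref{lem:symmetry_for_derivatives} together with Lemma \ref{lem:unique_continuation} to pin down the unknown boundary density up to a constant, and then do linear algebra inside each eigenspace. The one genuine gap is exactly the step you flag yourself: the claim that for $|I_j|>1$ the condition $\sum_k f_k^\alpha (hf_k)^\beta=\sum_k f_k^\beta (hf_k)^\alpha$ still forces $h$ to be Taylor-constant ``by reducing to the rank-one situation through linear combinations.'' No such reduction is given, and it is not obvious: writing $H$ for the multiplication-by-$h$ operator on formal power series and $U=\sum_k u_k\otimes u_k$ for the symmetric operator built from the Taylor vectors $u_k$ of the $\pd_\nu\phi_k$, your condition only says that $HU$ is symmetric, which does not visibly decouple into rank-one constraints, and Lemma \ref{lem:symmetry_for_derivatives} as stated covers only the rank-one case.

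The good news is that you do not need this step at all. The lowest Dirichlet eigenvalue of $-\Delta_g+q$ on a connected $M$ is simple (see \cite[Thm 6.5.2]{evans}), so running your symmetry argument for $j=0$ alone already determines the Taylor series of $1/\mu$ at $p_0$ up to one positive scalar $\lambda$; since $\mu$ is the same density for every $j$, this single normalization propagates to all $j$, and your Gram-factorization step --- which needs no symmetry argument, only Lemma \ref{lem:unique_continuation} to guarantee that the rank equals $|I_j|$ and that the Taylor map on $V_j$ is injective --- then produces bases $(\pd_\nu\psi_k|_{\Gamma_2})_{k\in I_j}$ with the common constant $C=\lambda^{-1/2}$. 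This is precisely how the paper sidesteps the multiplicity issue: its condition (A1) is imposed only on the ground-state kernel $l_0$, and the higher eigenspaces are handled by comparing the coefficient matrices through the operator identity (A3) and a biorthogonality computation, which plays the role of your factorization. With that modification your argument closes.
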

\begin{proof}
Choose a smooth positive measure $d\mu$ on $\Sigma$. 
Then there is a positive function $\eta \in C^\infty(\Sigma)$ such that $\eta d\mu = dS_g|_\Sigma$.
As $\eta > 0$, the functions $(\eta \pd_\nu \phi_k|_{\Gamma_1})_{k \in I_j}$
are linearly independent for all $j \in \N$.

For all $j \in \N$, denote $L_j := L_{\Gamma_1, \Gamma_2; j}$, and define
\begin{eqnarray*}
&l_j(x,y) := \sum_{k \in I_j} \eta(x) \pd_\nu \phi_k(x) \pd_\nu \phi_k(y), 
  \quad x,y \in \pd M,
\\&\tilde E_j^1 := \linspan(\eta \pd_\nu \phi_k|_{\Gamma_1})_{k \in I_j},
\quad E_j^2 := \linspan(\pd_\nu \phi_k|_{\Gamma_2})_{k \in I_j}.
\end{eqnarray*}

Note, that for the smallest eigenvalue $\lambda_0$, the space of eigenfunctions is one dimensional 
(see e.g. \cite[Thm 6.5.2]{evans}),
and so 
\begin{equation*}
l_0(x,y)
= \eta(x) \pd_\nu \phi_0(x) \pd_\nu \phi_0(y).
\end{equation*}

Consider a positive function $\tilde \eta \in C^\infty(\Sigma)$ and 
real-valued functions $e_k \in C^\infty(\Sigma)$, $k \in \N$, such that
the following three conditions hold:
\begin{itemize}
\item[(A1)] If $x_0 \in \bar \Gamma_1 \cap \bar \Gamma_2$, then
in local coordinates of $\Sigma$ taking $x_0$ to $0$
\begin{equation*}
\pd_x^\alpha \pd_y^\beta \left( \frac{l_0(x,y)}{\tilde \eta(x)} \right)|_{x = 0, y = 0} 
= \pd_x^\beta \pd_y^\alpha \left( \frac{l_0(x,y)}{\tilde \eta(x)} \right)|_{x = 0, y = 0} 
\end{equation*}
for all multi-indices $\alpha, \beta \in \N^{n-1}$.
\item[(A2)] $\linspan(\tilde \eta e_k|_{\Gamma_1})_{k \in I_j} = \tilde E_j^1$ and
$\linspan(e_k|_{\Gamma_2})_{k \in I_j} = E_j^2$ for all $j \in \N$. 
\item[(A3)] $\tilde L_j = L_j$ for all $j  \in \N$, where
\begin{equation*}
\tilde L_j f(y) := \sum_{k \in I_j} (f, \tilde \eta e_k)_{L^2(\pd M, d\mu)} e_k(y),
\quad f \in C_0^\infty(\Gamma_1),\ y \in \Gamma_2.
\end{equation*}
\end{itemize}

Such functions $\tilde \eta$ and $e_k$ exist. 
For example, $\tilde \eta = \eta$ and $e_k = \pd_\nu \phi_k|_\Sigma$ satisfy the conditions.

Next we show the following two statements.
\begin{itemize}
\item[(i)] We can verify using the data (\ref{eqn:data_of_thm_boudary_data_to_spectral_data}),
whether any given functions 
$\tilde \eta \in C^\infty(\Sigma)$ and $e_k \in C^\infty(\Sigma)$, $k \in \N$,
satisfy the conditions (A1), (A2) and (A3).
\item[(ii)] There is an orthonormal basis of eigenfunctions $(\psi_k)_{k \in \N}$ 
of operator $A$ and a constant $C > 0$, not depending on $k$, such that
\begin{equation*}
e_k|_{\Gamma_2} = C \pd_\nu \psi_k|_{\Gamma_2}, \quad k \in \N.
\end{equation*}
\end{itemize}

If (i) holds, then the data (\ref{eqn:data_of_thm_boudary_data_to_spectral_data}) determine 
the nonempty collection 
\begin{equation*}
\{ (e_k|_{\Gamma_2})_{k \in \N}\ |\ \text{(A1)-(A3) hold 
with a positive $\tilde \eta \in C^\infty(\Sigma)$} \},
\end{equation*}
and if (ii) holds, then any element from this collection determines a
collection of type (\ref{eq:boundary_spectral_data}).
So the claim of the theorem is proved after proving (i) and (ii).

Let us show the claim (i). 
Clearly, the condition (A3) can be verified using the data
(\ref{eqn:data_of_thm_boudary_data_to_spectral_data}). 
As 
\begin{equation*}
L_j f(y) = (f, \sum_{k \in I_j} (\pd_\nu \phi_k(y)) \eta \pd_\nu \phi_k )_{L^2(\pd M, d\mu)},
\quad f \in C_0^\infty(\Gamma_1), y \in \Gamma_2,
\end{equation*}
by varying $f \in C_0^\infty(\Gamma_1)$, we see that the map $L_j$ 
determines the function $l_j|_{\Gamma_1 \times \Gamma_2}$.

Let $x_0 \in \bar \Gamma_1 \cap \bar \Gamma_2$.
Given $l_0|_{\Gamma_1 \times \Gamma_2}$ and $\tilde \eta$, it is possible to compute 
\begin{equation*}
\pd_x^\alpha \pd_y^\beta \left( \frac{l_0(x,y)}{\tilde \eta(x)} \right), 
\quad (x, y) \in \Gamma_1 \times \Gamma_2, \quad \alpha, \beta \in \N^{n-1},
\end{equation*}
in any local coordinates $\Sigma$ taking $x_0$ to $0$.
By smoothness of $\eta$, $\tilde \eta$ and $\p_\nu \phi_0$, 
these functions are known also at $(x, y) = (0, 0)$.
Hence the condition (A1) can be verified using the data 
(\ref{eqn:data_of_thm_boudary_data_to_spectral_data}).

Taking $X = L^2(\Gamma_1, d\mu)$, $Y = L^2(\Gamma_2, d\mu)$ and 
$D = C_0^\infty(\Gamma_1)$ in the formulation of Remark \ref{lem:spans},
we see that the map $L_j$ determines the spaces $\tilde E_j^1$ and $E_j^2$.
Hence the condition (A2) can be verified using the data
(\ref{eqn:data_of_thm_boudary_data_to_spectral_data}), 
and the claim (i) is proved.

Let us show the claim (ii).
Let $x_0  \in \overline \Gamma_1 \cap \overline \Gamma_2$.
Lemma \ref{lem:unique_continuation} gives that, in local coordinates of $\p M$ taking $x_0$ to $0$, 
there is a multi-index $\alpha \in \N^{n-1}$ such that $\pd^\alpha \pd_\nu \phi_0(0) \ne 0$.
Hence the condition (A1) and Lemma \ref{lem:symmetry_for_derivatives}
imply, that $\pd^\beta (\eta \tilde \eta^{-1})(0) = 0$ for 
all nonzero multi-indices $\beta \in \N^{n-1}$.

Fix $j \in \N$ and, to simplify the notation, drop the subindices $j$ from now on.
By the condition (A2) 
\begin{equation*}
\tilde \eta e_l|_{\Gamma_1} = \sum_{k \in I} a_{lk} \eta \pd_\nu \phi_k|_{\Gamma_1},
\quad e_l|_{\Gamma_2} = \sum_{k \in I} b_{lk} \pd_\nu \phi_k|_{\Gamma_2}
\quad l \in I,
\end{equation*}
for some constant matrices $A := (a_{lk})_{l,k \in I}$ and $B := (b_{lk})_{l,k \in I}$. 

Fix $x_0 \in \overline \Gamma_1 \cap \overline \Gamma_2$, let $l \in I$ and 
define the function 
\begin{equation*}
\phi(p) := \sum_{k \in I} \left( a_{lk} \frac{\eta(x_0)}{\tilde \eta(x_0)} - b_{lk} \right)
  \phi_k(p), \quad p \in M.
\end{equation*}
We have seen that, in local coordinates of $\p M$ taking $x_0$ to $0$,
the equation $\pd^\beta (\eta \tilde \eta^{-1})(0) = 0$ holds for
all nonzero multi-indices $\beta \in \N^{n-1}$.
Hence for any multi-index $\alpha  \in \N^{n-1}$
\begin{eqnarray*}
0 &= \pd^\alpha e_l(0) - \pd^\alpha e_l(0)
\\&= 
 \pd_x^\alpha 
   \l. \left( \sum_{k \in I} a_{lk} \frac{\eta(x)}{\tilde \eta(x)} \pd_\nu \phi_k(x) \right) \r|_{x = 0} 
 - \pd_y^\alpha \l. \left( \sum_{k \in I}  b_{lk} \pd_\nu \phi_k(y) \right)\r|_{y = 0}
\\&= \sum_{k \in I} 
  \left( a_{lk} \frac{\eta(0)}{\tilde \eta(0)} - b_{lk} \right)
  \pd^\alpha \pd_\nu \phi_k(0)  
= \pd^\alpha \pd_\nu \phi(0).
\end{eqnarray*}
By Lemma \ref{lem:unique_continuation}, 
the coefficients 
\begin{equation*}
a_{lk} \frac{\eta(0)}{\tilde \eta(0)} - b_{lk}, 
\quad k,l \in I
\end{equation*}
vanish, and so $\eta(0) \tilde \eta(0)^{-1} A = B$.

Moreover
\begin{eqnarray*}
&(h, \tilde Lf)_{L^2(\pd M, dS_g)}
\\&\quad=  
\sum_{k \in I} (f, \tilde \eta e_k)_{L^2(\pd M, d\mu)} (h, e_k)_{L^2(\pd M, dS_g)}
\\&\quad= 
\sum_{k \in I} (f, \sum_{l \in I} a_{kl} \eta \pd_\nu \phi_l)_{L^2(\pd M, d\mu)} 
  (h, \sum_{m \in I} b_{km} \pd_\nu \phi_m)_{L^2(\pd M, dS_g)}
\\&\quad= \sum_{l,m \in I} \left( \sum_{k \in I} a_{kl} b_{km} \right) 
  (f, \pd_\nu \phi_l)_{L^2(\pd M, dS_g)} (h, \pd_\nu \phi_m)_{L^2(\pd M, dS_g)},
\end{eqnarray*}
for all $f \in C_0^\infty(\Gamma_1)$ and $h \in C_0^\infty(\Gamma_2)$.
On the other hand, the condition (A3) gives
\begin{eqnarray*}
(h, \tilde Lf)_{L^2(\pd M, dS_g)} 
&= (h, Lf)_{L^2(\pd M, dS_g)}
\\&= 
 \sum_{k \in I} (f,\pd_\nu \phi_k)_{L^2(\pd M, dS_g)} 
   (h, \pd_\nu \phi_k)_{L^2(\pd M, dS_g)},
\end{eqnarray*}
for all $f \in C_0^\infty(\Gamma_1)$ and $h \in C_0^\infty(\Gamma_2)$.

Denote $(\cdot, \cdot) := (\cdot, \cdot)_{L^2(\pd M, dS_g)}$.
By density of $C_0^\infty(\Gamma_p)$ in $L^2(\Gamma_p, dS_g)$, $p = 1, 2$,
\begin{equation*}
\sum_{l,m \in I} \left( \sum_{k \in I} a_{kl} b_{km} \right) 
  (f, \pd_\nu \phi_l) (h, \pd_\nu \phi_m)
 = \sum_{k \in I} (f,\pd_\nu \phi_k) 
   (h, \pd_\nu \phi_k),
\end{equation*}
for all $f \in L^2(\Gamma_1, dS_g)$ and $h \in L^2(\Gamma_2, dS_g)$.

Let $(f_l)_{l \in I}$ be biorthogonal with $(\pd_\nu \phi_l|_{\Gamma_1})_{l \in I}$ 
in $L^2(\Gamma_1, dS_g)$, and 
let$(h_m)_{m \in I}$ be biorthogonal with $(\pd_\nu \phi_m|_{\Gamma_2})_{m \in I}$
in $L^2(\Gamma_2, dS_g)$, that is,
\begin{equation*}
(f_{l'}, \p_\nu \phi_l) = \delta_{l'l}, \quad (h_{m'}, \p_\nu \phi_m) = \delta_{m'm}, 
\quad l', l, m, m' \in I.
\end{equation*}

Then
\begin{eqnarray*}
\sum_{k \in I} a_{kl'} b_{km'}
 &= \sum_{l,m \in I} \left( \sum_{k \in I} a_{kl} b_{km} \right) 
  (f_{l'}, \pd_\nu \phi_l) (h_{m'}, \pd_\nu \phi_m)
 \\&= \sum_{k \in I} (f_{l'},\pd_\nu \phi_k) 
   (h_{m'}, \pd_\nu \phi_k)
 = \delta_{l'm'}, \quad l',m' \in I.
\end{eqnarray*}

Denote $c := \eta(0)^{-1} \tilde \eta(0) > 0$. 
We have shown, that $I = A^T B = c B^T B$. Hence the matrix $\sqrt{c} B$ is orthogonal. 
To conclude, we observe that
\begin{equation*}
e_l|_{\Gamma_2} 
= \frac{1}{\sqrt{c}} \pd_\nu \sum_{k \in I} \sqrt{c} b_{lk} \phi_k|_{\Gamma_2},
\quad l \in I,
\end{equation*}
where $(\sum_{k \in I} \sqrt{c} b_{lk} \phi_k)_{l \in I}$ 
is an orthonormal basis of eigenfunctions corresponding the eigenvalue $\lambda_j$.
\end{proof}

As discussed in the previous section, the operator $\Lambda_{\Gamma_1, \Gamma_2}$
determines the collection (\ref{eqn:data_of_thm_boudary_data_to_spectral_data}).
So by previous theorem, 
if $\overline \Gamma_1 \cap \overline \Gamma_2 \ne \emptyset$,
then the operator $\Lambda_{\Gamma_1, \Gamma_2}$
determines the collection (\ref{eq:boundary_spectral_data}).
The collection (\ref{eq:boundary_spectral_data}) determines 
the manifold up to an isometry by \cite[Chapter 4.4]{KKL}. 
This proves Theorem \ref{thm:touching_main}.

\section{Inverse problem with observations far away from sources}

In this section we prove Theorems \ref{thm:combining_data_infinite_time}
and \ref{thm:combining_data_finite_time}. 

\begin{proof}[Proof of Theorem \ref{thm:combining_data_infinite_time}]
Denote $L^{p \to q}_j = L_{\Gamma_p, \Gamma_q; j}$.
It is enough to show, that the collection 
\begin{equation}
\label{eqn:data_of_thm_combining_data_infinite_time}
\{ (\lambda_j, L_j^{1 \to 2}, L_j^{1 \to 2}, L_j^{2 \to 3})
\ |\ j \in \N \}
\end{equation}
determines a collection of type (\ref{eq:boundary_spectral_data}).

Denote $\Sigma = \Gamma_1 \cup \Gamma_2 \cup \Gamma_3$, and 
choose a smooth positive measure $d\mu$ on $\Sigma$. 
There is a positive function $\eta \in C^\infty(\Sigma)$ such that $\eta d\mu = dS_g|_{\Sigma}$.
Define for all $j \in \N$
\begin{eqnarray*}
\tilde E_j^p &:= \linspan(\eta \pd_\nu \phi_k|_{\Gamma_p})_{k \in I_j}, 
\quad p = 1, 2,
\\E_j^q &:= \linspan(\pd_\nu \phi_k|_{\Gamma_q})_{k \in I_j},
\quad q = 2, 3.
\end{eqnarray*}

Choose a positive function $\tilde \eta \in C^\infty(\Sigma)$ and 
real-valued functions $e_k \in C^\infty(\Sigma)$, $k \in \N$, such that
the following two conditions hold:
\begin{itemize}
\item[(B1)] for all $j \in \N$
\begin{eqnarray*}
\linspan(\tilde \eta e_k|_{\Gamma_p})_{k \in I_j} &= \tilde E_j^p,
\quad p = 1, 2,
\\\linspan(e_k|_{\Gamma_q})_{k \in I_j} &= E_j^q,
\quad q = 2, 3.
\end{eqnarray*}
\item[(B2)] $\tilde L_j^{p \to q} = L_j^{p \to q}$ for all $j \in \N$ and
$(p,q) = (1,2), (1,3), (2,3)$, where
\begin{equation*}
\tilde L_j^{p \to q} f(y) := \sum_{k \in I_j} (f, \tilde \eta e_k)_{L^2(\pd M, d\mu)} e_k(y)
\quad f \in C_0^\infty(\Gamma_p), y \in \Gamma_q.
\end{equation*}
\end{itemize}

Again, such functions $\tilde \eta$ and $e_k$ exist, as 
$\tilde \eta = \eta$ and $e_k = \pd_\nu \phi_k|_\Sigma$ satisfy the conditions (B1) and (B2).
It is enough to show the following two statements:
\begin{itemize}
\item[(i)] 
We can verify using the data (\ref{eqn:data_of_thm_combining_data_infinite_time}), whether
any given functions $\tilde \eta \in C^\infty(\Sigma)$ and $e_k \in C^\infty(\Sigma)$, $k \in \N$,
satisfy the conditions (B1) and (B2).
\item[(ii)] There is an orthonormal basis of eigenfunctions $(\psi_k)_{k \in \N}$ 
of operator $A$ and a constant $C > 0$, not depending on $k$, such that
\begin{equation*}
e_k|_{\Gamma_2} = C \pd_\nu \psi_k|_{\Gamma_2}, \quad k \in \N.
\end{equation*}
\end{itemize}

Analogously with the proof of Theorem \ref{thm:boudary_data_to_spectral_data},
the maps $L^{1 \to 2}_j$, $L^{1 \to 3}_j$ and $L^{2 \to 3}_j$ determine
the spaces $\tilde E_j^1$, $E_j^2$, $E_j^3$ and $\tilde E_j^2$.
Hence the claim (i) is proved.

Let us show the claim (ii). 
Fix $j \in \N$ and, to simplify the notation, drop the subindices $j$ from now on.
The condition (B1) gives, that for all $l \in I$
\begin{eqnarray*}
&\tilde \eta e_l|_{\Gamma_1} = \sum_{k \in I} a_{lk} \eta \pd_\nu \phi_k|_{\Gamma_1},
\quad \tilde \eta e_l|_{\Gamma_2} = \sum_{k \in I} \tilde b_{lk} \eta \pd_\nu \phi_k|_{\Gamma_2},
\\&e_l|_{\Gamma_2} = \sum_{k \in I} b_{lk} \pd_\nu \phi_k|_{\Gamma_2},
\quad e_l|_{\Gamma_3} = \sum_{k \in I} c_{lk} \pd_\nu \phi_k|_{\Gamma_3},
\end{eqnarray*}
for some constant matrices 
\begin{equation*}
A := (a_{lk})_{l,k \in I}, \quad \tilde B := (\tilde b_{lk})_{l,k \in I}, 
\quad B := (b_{lk})_{l,k \in I}, \quad C := (c_{lk})_{l,k \in I}.
\end{equation*}

For the smallest eigenvalue $\lambda_0$, the space of eigenfunctions is one dimensional,
and so 
\begin{equation*}
0 = e_0(y) - e_0(y) 
= \left( \frac{\eta(y)}{\tilde \eta(y)} \tilde b_{00} - b_{00} \right) \pd_\nu \phi_0(y)
\end{equation*}
for all $y \in \Gamma_2$.
By Lemma \ref{lem:unique_continuation}, the set 
\begin{equation*}
N := \{ y \in \Gamma_2 : \pd_\nu \phi_0(y) = 0 \}
\end{equation*}
does not contain a nonempty open set of $\p M$.
Hence $\bar{\Gamma_2 \setminus N} = \bar \Gamma_2$.
Moreover, $\eta \tilde \eta^{-1} \tilde b_{00} - b_{00} = 0$ in 
$\Gamma_2 \setminus N$ and by continuity also in the whole set $\Gamma_2$.

Denote by $c$ the constant $\eta^{-1} \tilde \eta|_{\Gamma_2} > 0$.
As $(\pd_\nu \phi_k|_{\Gamma_2})_{k \in I}$ are linearly independent,
$\tilde B = cB$ for all $j \in \N$. 
Moreover, we may use the condition (B2) as we used the corresponding condition in
the proof of Theorem \ref{thm:boudary_data_to_spectral_data}, and get
\begin{equation*}
A^T B = I, \quad A^T C = I, \quad \tilde B^T C = I.
\end{equation*}
Hence $B = C$ and $cB^T B = I$.

To conclude, we observe that 
\begin{equation*}
e_l|_{\Gamma_2} 
= \frac{1}{\sqrt{c}} \pd_\nu \sum_{k \in I} \sqrt{c} b_{lk} \phi_k|_{\Gamma_2},
\quad l \in I,
\end{equation*}
where $(\sum_{k \in I} \sqrt{c} b_{lk} \phi_k)_{l \in I}$ 
is an orthonormal basis of eigenfunctions corresponding the eigenvalue $\lambda_j$.
\end{proof}

We prove Theorem \ref{thm:combining_data_finite_time} by 
reduction to Theorem \ref{thm:combining_data_infinite_time} 
using a time continuation argument similar to \cite{KuLa2}.

\begin{lemma}
Suppose that $\Gamma_1, \Gamma_2 \subset \p M$ are open and nonempty.
Denote
\begin{equation*}
T^* := 2 \max \{ d(x,y) : x \in \Gamma_1, y \in M \}.
\end{equation*}
If $T^* < t_0 < T$, 
then the smooth manifolds $\Gamma_1$, $\Gamma_2$, the operator $\Lambda_{\Gamma_1, \Gamma_2}^T$ 
and the inner products 
\begin{equation}
\label{eqn:continuation_inner_products}
(u^f(t_0), u^h(t_0))_{L^2(M)}, \quad f, h \in C_0^\infty(\Gamma_1 \times (0,T))
\end{equation}
determine the operator $\Lambda_{\Gamma_1, \Gamma_2}^{T+\delta}$
for $\delta < t_0 - T^*$.
\label{lem:continuation_of_data}
\end{lemma}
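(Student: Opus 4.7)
The plan is to decompose a given source $h \in C_0^\infty(\Gamma_1 \times (0, T+\delta))$ in time as $h = h_A + h_B$ via a smooth time partition of unity, with $\supp(h_B) \subset \Gamma_1 \times (\delta + \eta, T+\delta)$ and $\supp(h_A) \subset \Gamma_1 \times (0, t_0 - \eta)$ for a small $\eta > 0$; this is possible because the hypothesis $\delta < t_0 - T^*$ forces $\delta < t_0$. The late piece $h_B$ is handled by the time translation $\tilde h_B(x, s) := h_B(x, s + \delta) \in C_0^\infty(\Gamma_1 \times (0, T))$: because $h_B$ vanishes on $(0, \delta)$, uniqueness of wave solutions gives $u^{h_B}(x, t) = u^{\tilde h_B}(x, t - \delta)$ for all $t \geq 0$, so $\pd_\nu u^{h_B}|_{\Gamma_2 \times (\delta, T+\delta)}$ can be read off from $\Lambda_{\Gamma_1, \Gamma_2}^T \tilde h_B$; on $\Gamma_2 \times (0, \delta)$ the normal derivative vanishes by finite speed of propagation.

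For the early piece $h_A$, the data $\Lambda_{\Gamma_1, \Gamma_2}^T$ gives $\pd_\nu u^{h_A}$ on $\Gamma_2 \times (0, T)$ directly, so only the window $\Gamma_2 \times (T, T+\delta)$ remains. Since $h_A$ is silent after $t_0 - \eta$, the solution $u^{h_A}$ satisfies the homogeneous wave equation with zero Dirichlet data for $t \geq t_0$, and $u^{h_A}(T+\tau)$ is the free-wave evolution of the pair $(u^{h_A}(t_0), \pd_t u^{h_A}(t_0))$ by time $T+\tau - t_0$. The pairing $(u^{h_A}(t_0), u^g(t_0))_{L^2(M)}$ is in the hypothesis for every $g \in C_0^\infty(\Gamma_1 \times (0, T))$, and the derivative pairing $(\pd_t u^{h_A}(t_0), u^g(t_0))_{L^2(M)}$ follows by differentiating at $\rho = 0$ the shift identity $(u^{h_A^\rho}(t_0), u^g(t_0)) = (u^{h_A}(t_0 - \rho), u^g(t_0))$ for the right-translate $h_A^\rho(x, s) := h_A(x, s - \rho)$, which lies in $C_0^\infty(\Gamma_1 \times (0, T))$ for small $\rho \geq 0$. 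Since $t_0 > T^* \geq \max_{y \in M} d(\Gamma_1, y)$, Tataru's unique continuation theorem makes $\{u^g(t_0) : g \in C_0^\infty(\Gamma_1 \times (0, t_0))\}$ dense in $L^2(M)$, so these pairings determine the pair $(u^{h_A}(t_0), \pd_t u^{h_A}(t_0))$ as an element of $L^2(M) \times L^2(M)$.

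To finally recover the pointwise trace $\pd_\nu u^{h_A}(y, T+\tau)$ for $y \in \Gamma_2$ and $\tau \in (0, \delta)$, I apply the free-evolution formula $u^{h_A}(T+\tau) = C(s) u^{h_A}(t_0) + S(s) \pd_t u^{h_A}(t_0)$ with $s := T+\tau - t_0$ and $C(s), S(s) := \cos(s\sqrt{A}), \sin(s\sqrt{A})/\sqrt{A}$. For $g \in C_0^\infty(\Gamma_1 \times (0, t_0))$ the wave $u^g$ is itself homogeneous for $t \geq t_0$, so its boundary trace $\pd_\nu u^g(\cdot, t_0 + s) = \Lambda_{\Gamma_1, \Gamma_2}^T g(\cdot, t_0 + s)$ is available whenever $t_0 + s < T$. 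The identity $u^g(t_0 + s) = C(s) u^g(t_0) + S(s) \pd_t u^g(t_0)$ together with the self-adjointness of $C(s)$ and $S(s)$ on $L^2(M)$ then expresses the pairing of $\pd_\nu u^{h_A}(\cdot, T+\tau)$ against a dense family of test functions on $\Gamma_2$ in terms of quantities computed in the previous step; the sharp condition $\delta < t_0 - T^*$ is exactly what keeps all the intermediate shifts of the $g$'s inside $C_0^\infty(\Gamma_1 \times (0, T))$ where $\Lambda_{\Gamma_1,\Gamma_2}^T$ applies.

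The main obstacle is this last conversion step: boundary traces are not continuous on $L^2(M)$, so lifting the abstract $L^2$-identification of the evolved state at $t_0$ to a pointwise boundary trace at $(T, T+\delta)$ requires a careful use of the spectral structure of the homogeneous propagator, the self-adjointness of $C(s)$ and $S(s)$, and the tight timing constraint $t_0 - \delta > T^*$ that keeps every intermediate construction inside the accessible data window $(0, T)$.
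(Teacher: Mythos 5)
Your decomposition $h = h_A + h_B$ and the treatment of the late piece by time translation coincide with the paper's argument, and your use of the shift identity to extract $(\pd_t u^{h_A}(t_0), u^g(t_0))_{L^2(M)}$ from the given inner products is sound. The gap is in the final step, which you yourself flag as ``the main obstacle'' but do not actually close. Identifying the Cauchy data $(u^{h_A}(t_0), \pd_t u^{h_A}(t_0))$ only through its $L^2$-pairings against the dense family $\{u^g(t_0)\}$ is a purely weak identification, and the Neumann trace of the free evolution is \emph{not} a continuous functional of the Cauchy data in the $L^2\times H^{-1}$ (or even $L^2\times L^2$) topology; the hidden-regularity trace estimate requires control in the energy space $H^1_0(M)\times L^2(M)$. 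Moreover, since $M$ itself is unknown, ``knowing the state as an element of $L^2(M)$'' has no operational meaning: the only admissible conclusion is a quantity computable from $\Lambda^T_{\Gamma_1,\Gamma_2}$ and the inner products, and your proposed pairing of $\pd_\nu u^{h_A}(\cdot,T+\tau)$ against ``test functions on $\Gamma_2$'' via self-adjointness of $\cos(s\sqrt{A})$ and $\sin(s\sqrt{A})/\sqrt{A}$ produces interior inner products of evolved states, not boundary integrals over $\Gamma_2$; converting the latter into the former would need a Green's identity with controlled data on $\Gamma_2$, which is not among the hypotheses.

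The paper closes exactly this gap differently: instead of identifying the state at $t_0$ abstractly, it approximates $(v^{h}(t_0),\pd_t v^{h}(t_0))$ in $H^1_0(M)\times L^2(M)$ by states $(v^{Y_\delta h_j}(t_0),\pd_t v^{Y_\delta h_j}(t_0))$ generated by sources $h_j\in C_0^\infty(\Gamma_1\times(0,t_0-\delta))$ (possible since $t_0-\delta>T^*$), verifies this energy-norm convergence \emph{from the given inner products} by rewriting $\|dw_j\|^2+c\|w_j\|^2$ as $(-\pd_t^2 w_j,w_j)+c(w_j,w_j)$ and expressing the time derivatives through derivatives of shifted inner products, and then invokes continuous dependence of the Neumann trace on energy-space Cauchy data to conclude $\Lambda h(\cdot,T+\epsilon)=\lim_j \Lambda^T_{\Gamma_1,\Gamma_2}h_j(\cdot,T-(\delta-\epsilon))$. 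To repair your argument you would need both ingredients: an approximation scheme whose shifted sources land their Neumann data inside the window $(0,T)$ where $\Lambda^T_{\Gamma_1,\Gamma_2}$ is available, and a data-verifiable criterion for convergence in the energy norm rather than weakly in $L^2$.
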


\begin{proof}
Denote by $Y_s$ the time delay operator 
\begin{equation*}
Y_s f (\cdot, t) := f(\cdot, t - s), \quad t,s \in \R.
\end{equation*}
As the coefficients of the wave equation (\ref{def:wave_eq}) 
are time-independent,
$v^{Y_s f}(x, t) = (Y_s v^f)(x, t)$ and $(\Lambda Y_s f)(x, t) = (Y_s \Lambda f)(x, t)$.

Let $f\in C_0^\infty(\Gamma_1 \times (0, T + \delta)$ and
choose $h \in C_0^\infty(\Gamma_1 \times (0, t_0))$
and $h' \in C_0^\infty(\Gamma_1 \times (\delta, T + \delta))$
such that $f = h + h'$.

Let $\epsilon \in (0, \delta)$. 
As $\supp(Y_{-\delta} h') \subset \Gamma_1 \times (0, T)$,
the operator $\Lambda_{\Gamma_1, \Gamma_2}^T$ determine the function
\begin{equation*}
\Lambda h'(\cdot, T + \epsilon) 
= (Y_{-\delta} \Lambda h')(\cdot, T - (\delta - \epsilon))
= (\Lambda_{\Gamma_1, \Gamma_2}^T Y_{-\delta}h')(\cdot, T - (\delta - \epsilon)), 
\end{equation*}
in $\Gamma_2$.
Therefore, it is enough to show that the given data determine also 
\begin{equation*}
\Lambda h(x, T + \epsilon), \quad x \in \Gamma_2.
\end{equation*}

Consider a sequence 
$(h_j)_{j=1}^\infty \subset C_0^\infty(\Gamma_1 \times (0,t_0 - \delta))$ 
satisfying the following two conditions.
\begin{itemize}
\item[(C1)] $\lim_{j \to \infty} v^{Y_\delta h_j}(t_0) = v^h(t_0)$ in $H^1(M)$,
\item[(C2)] $\lim_{j \to \infty} \pd_t v^{Y_\delta h_j}(t_0) = \pd_t v^h(t_0)$ 
    in $L^2(M)$.
\end{itemize}
Such a sequence exists, since $t_0 > T^*$, and thus we see exactly as in 
\cite[Thm. 4.28]{KKL} that the set
\begin{equation*}
\{(v^f(t_0 - \delta), \pd_t v^f(t_0 - \delta)) : 
    f \in C_0^\infty(\Gamma_1 \times (0,t_0 - \delta)) \}
\end{equation*}
is dense in $H_0^1(M) \times L^2(M)$.

Let us prove, that (C1) is equivalent with
\begin{itemize}
\item[(C1')] For all $c > 0$ 
\begin{equation*}
\lim_{j \to \infty} \left( ( -\Delta_g w_j + q w_j, w_j)_{L^2(M)} + c(w_j, w_j)_{L^2(M)} \right)= 0,
\end{equation*}
where $w_j := v^{Y_\delta h_j}(t_0) - v^{h}(t_0)$.
\end{itemize}

As $\supp(Y_\delta h_j) \subset \Gamma_1 \times (0,t_0)$ and $\supp(h) \subset \Gamma_1 \times (0,t_0)$,
we have that $w_j|_{\pd M} = 0$. Hence 
\begin{equation*}
-(\Delta_g w_j, w_j)_{L^2(M)} = (dw_j, dw_j)_{L^2(M)},
\end{equation*}
where $d$ is the exterior derivative on $M$.
If (C1) holds, then 
\begin{eqnarray*}
&|( -\Delta_g w_j + q w_j, w_j)_{L^2(M)} + c(w_j, w_j)_{L^2(M)}|
\\&\quad\le \norm{dw_j}_{L^2(M)}^2 + \norm{q + c}_{L^\infty(M)} \norm{w_j}_{L^2(M)}^2
\\&\quad\to 0, \quad \text{as $j \to \infty$}.
\end{eqnarray*}
For large enough $c > 0$ there is a constant $c_0 > 0$ 
such that $q + c \ge c_0$. Hence if (C1') holds, then
\begin{equation*}
\norm{dw_j}_{L^2(M)}^2 + c_0 \norm{w_j}_{L^2(M)}^2 \le 
( -\Delta_g w_j + q w_j, w_j)_{L^2(M)} + c(w_j, w_j)_{L^2(M)} \to 0,
\end{equation*}
as $j \to \infty$, and (C1) holds.
Therefore (C1) and (C1') are equivalent.

Next we observe that
\begin{eqnarray*}
&-\pd_s^2 (v^{Y_s(Y_\delta h_j - h)}(t_0), v^{Y_\delta h_j - h}(t_0))_{L^2(M)}|_{s = 0} 
\\&\quad= (-\pd_t^2 w_j, w_j)_{L^2(M)} 
= ( (-\Delta_g + q) w_j, w_j)_{L^2(M)}.
\end{eqnarray*}
Hence the condition (C1') can be verified 
for given functions $(h_j)_{j=1}^\infty$ and $h$
using the inner products 
(\ref{eqn:continuation_inner_products}).

Similarly,
\begin{equation*}
\pd_{s_1} \pd_{s_2} 
    (v^{Y_{s_1}(Y_\delta h_j - h)}(t_0), v^{Y_{s_2}(Y_\delta h_j - h)}(t_0))_{L^2(M)}
    |_{s_1 = 0, s_2 = 0}
= (\pd_t w_j, \pd_t w_j)_{L^2(M)},
\end{equation*}
and condition (C2) can be verified 
for given functions $(h_j)_{j=1}^\infty$ and $h$
using the inner products 
(\ref{eqn:continuation_inner_products}).

As $v^{Y_\delta h_j} - v^{h} = 0$ on $\pd M \times [t_0, \infty)$,
conditions (C1) and (C2) together with 
the continuous dependency of the solution of the wave equation on the initial data, see e.g.
\cite[Thm. 2.30]{KKL}, give
\begin{equation*}
\lim_{j \to \infty} \pd_\nu (v^{Y_\delta h_j} - v^{h}) = 0,
\quad \text{in $L^2(\pd M \times (t_0, \infty))$.}
\end{equation*}

We have seen that, the inner products (\ref{eqn:continuation_inner_products}) determine
for any $h \in C_0^\infty(\Gamma_1 \times (0, t_0))$
the nonempty set 
\begin{equation*}
\{ (h_j)_{j=1}^\infty \subset C_0^\infty(\Gamma_1 \times (0,t_0 - \delta))\ |\ 
\text{(C1) and (C2) hold} \},
\end{equation*}
and that any sequence in this set satisfies
\begin{equation*}
\Lambda h(x, T + t) 
= \lim_{j \to \infty} (\Lambda Y_\delta h_j)(x, T + t)
= \lim_{j \to \infty} \Lambda_{\Gamma_1, \Gamma_2}^T h_j(x, T - (\delta - t))
\end{equation*}
in $L^2(\Gamma_2 \times (T, T + \delta))$.
As $\Lambda h \in C^\infty(\Gamma_2 \times (0, T + \delta))$, 
the inner products (\ref{eqn:continuation_inner_products}) and the operator $\Lambda_{\Gamma_1, \Gamma_2}^T$ determine $\Lambda h(x, T + \epsilon)$ pointwise for $x \in \Gamma_2$ and $\epsilon \in (0, \delta)$.
\end{proof}

Next we prove the last of the three main theorems formulated in the introduction.

\begin{proof}[Proof of Theorem \ref{thm:combining_data_finite_time}]
By Lemma \ref{lem:symmetrization_of_data} the operators 
$\Lambda_{\Gamma_1, \Gamma_2}^T, \Lambda_{\Gamma_1, \Gamma_3}^T, \Lambda_{\Gamma_2, \Gamma_3}^T$
determine the operators
\begin{equation}
\label{eqn:ops_of_finite_time_thm}
\Lambda_{\Gamma_p, \Gamma_q}^T, \quad p,q = 1, 2, 3,\ p \ne q,
\end{equation}

We use the time delay operator $Y_s$ defined in the proof of Lemma 
\ref{lem:continuation_of_data}. Define
\begin{equation*}
B[f,h] := \int_0^T \int_{\pd M} 
\left( \pd_\nu v^f \bar{v^h} - v^f \bar{\pd_\nu v^h} \right) dS_g dt,
\quad f,h \in C_0^\infty(\pd M \times (0, \infty)),
\end{equation*}
and let $t_0 := T/2$.
We recall Blagovestchenskii identity \cite[Lem. 4.16]{KKL},
originating from \cite{Bl}, 
\begin{equation*}
(v^f(t_0), v^h(t_0))_{L^2(M)} = 
\frac{1}{2} \int_{-t_0}^{t_0} (\sign\ s) B[Y_{t_0+s}f, Y_{t_0-s}h] ds,
\end{equation*}
where $f,h \in C_0^\infty( \pd M \times (0,T))$.
By this identity, the operators (\ref{eqn:ops_of_finite_time_thm})
determine the inner products 
\begin{equation}
\label{eqn:inner_products_t0}
(v^f(t_0), v^h(t_0))_{L^2(M)}, 
\quad f \in C_0^\infty(\Gamma_p \times (0,T)),
\ h \in C_0^\infty(\Gamma_q \times (0,T)),
\end{equation}
for $p,q = 1, 2, 3$, $p \ne q$.

Next we will show that the operators (\ref{eqn:ops_of_finite_time_thm})
determine the inner products (\ref{eqn:inner_products_t0}) also for $p=q$, $p = 1, 2, 3$.

Let $f \in C_0^\infty(\Gamma_2 \times (0,T))$ and
consider a sequence $(\tilde{f_j})_{j=1}^\infty \subset C_0^\infty(\Gamma_3 \times (0, T))$
satisfying the following two conditions.
\begin{itemize}
\item[(D1)] For all $h \in C_0^\infty(\Gamma_1 \times (0, T))$
\begin{equation*}
\lim_{j \to \infty} (v^f(t_0) - v^{\tilde{f_j}}(t_0), v^h(t_0))_{L^2(M)} = 0.
\end{equation*}
\item[(D2)] The sequence $(\tilde{f_j})_{j=1}^\infty$ is bounded in 
$L^2(\Gamma_3 \times (0,T))$.
\end{itemize}

By assumption (A), there is $\tilde{f} \in L^2(\Gamma_3 \times (0, T))$
such that $v^f(t_0) = v^{\tilde{f}}(t_0)$.
Thus, there is a sequence $(\tilde{f_j})_{j=1}^\infty \subset C_0^\infty(\Gamma_3 \times (0, T))$
such that 
\begin{equation*}
\lim_{j \to \infty} \tilde{f_j} = \tilde{f}, \quad \text{in $L^2(\Gamma_3 \times (0, T))$}.
\end{equation*}
By \cite[Lem. 2.42]{KKL},
$v^{\tilde{f_j}}(t_0) \to v^{\tilde{f}}(t_0)$ in $L^2(M)$ as $j \to \infty$.
Hence a sequence satisfying the conditions (D1) and (D2) exists.

Let us next show that 
\begin{equation}
\label{eqn:inner_product_is_determined}
\norm{v^f(t_0)}_{L^2(M)}^2 = \lim_{j \to \infty} (v^{\tilde{f_j}}(t_0), v^f(t_0))_{L^2(M)}.
\end{equation}
As $t_0 > 2 d(x, y)$ for all $x \in \Gamma_1$ and $y \in M$, \cite[Thm. 3.10]{KKL} gives that the set
\begin{equation*}
\{v^h(t_0)\ |\ h \in C_0^\infty(\Gamma_1 \times (0,T)) \}
\end{equation*}
is dense in $L^2(M)$.
Let $\epsilon > 0$ and choose $h \in C_0^\infty(\Gamma_1 \times (0,T))$ such that
\begin{equation*}
\norm{v^f(t_0) - v^h(t_0)}_{L^2(M)} < \epsilon. 
\end{equation*}

By \cite[Lem. 2.42]{KKL} there is $C > 0$, 
and by the condition (D1) there is $J \in \N$ such that for $j \ge J$
\begin{eqnarray*}
&\left|\left(v^f(t_0) - v^{\tilde{f_j}}(t_0), v^f(t_0)\right)_{L^2(M)}\right|
\\&\quad\le \left|\left(v^f(t_0) - v^{\tilde{f_j}}(t_0), v^f(t_0) - v^h(t_0)\right)_{L^2(M)}\right|
\\&\quad\quad+ \left|\left(v^f(t_0) - v^{\tilde{f_j}}(t_0), v^h(t_0)\right)_{L^2(M)}\right|
\\&\quad\le C\norm{f - \tilde f_j}_{L^2(\pd M \times (0,T))} \epsilon + \epsilon.
\end{eqnarray*}
By the condition (D2) 
\begin{equation*}
\sup_{j \in \N} \norm{f - \tilde f_j}_{L^2(\pd M \times (0,T))} < \infty.
\end{equation*}
Hence the equation (\ref{eqn:inner_product_is_determined}) is valid.

By \cite[Thm. 3.10]{KKL} the functions $v^f(t_0)$, 
$f \in C_0^\infty(\Gamma_2 \times (0,T))$, are dense in $L^2(M)$.
Hence
\begin{equation}
\label{eqn:inner_product_is_determined2}
\norm{v^h(t_0)}_{L^2(M)} = \sup (v^h(t_0), v^f(t_0))_{L^2(M)} 
\end{equation}
where $h \in C_0^\infty(\Gamma_p \times (0,T))$, $p = 1, 3$,
and the supremum is taken over all $f \in C_0^\infty(\Gamma_2 \times (0,T))$ such that
$\norm{v^f(t_0)}_{L^2(M)} = 1$.

The condition (D1) can be verified 
for any $f$ and $(f_j)_{j=1}^\infty$ using the 
inner products (\ref{eqn:inner_products_t0}) for $p=2, 3$, $q = 1$.
Therefore, these inner products determine
for any $f \in C_0^\infty(\Gamma_2 \times (0,T))$ the nonempty set 
\begin{equation*}
\{ (\tilde{f_j})_{j=1}^\infty \subset C_0^\infty(\Gamma_3 \times (0, T))\ |\ \text{(D1), (D2) hold} \}.
\end{equation*}
By equation (\ref{eqn:inner_product_is_determined}) any sequence in this set 
together with inner products (\ref{eqn:inner_products_t0}) for $p=3$ and $q=2$
determine $\norm{v^f(t_0)}_{L^2(M)}$.

As $f \in C_0^\infty(\Gamma_2 \times (0,T))$ can be chosen freely,
the inner products (\ref{eqn:inner_products_t0}) for $p=2, 3$, $q = 1$ and for $p=3$, $q=2$
together with  polarization identity determine the inner products (\ref{eqn:inner_products_t0})
for $p = q = 2$. 

The equation (\ref{eqn:inner_product_is_determined2}), polarization identity and
the inner products (\ref{eqn:inner_products_t0})
for $p=1,2,3$, $q = 2$ 
determine the inner products (\ref{eqn:inner_products_t0}) for
$p = q = 1, 3$.

Therefore, the operators (\ref{eqn:ops_of_finite_time_thm})
determine the inner products 
\begin{equation}
\label{eqn:all_inner products}
(v^f(t_0), v^h(t_0))_{L^2(M)}, 
\quad f, h \in C_0^\infty(\Gamma_p \times (0,T)),\ p=1,2,3.
\end{equation}

Choose $\delta \in (0, t_0 - T^*)$,
where $T^*$ is defined as in Lemma \ref{lem:continuation_of_data}.
By Lemma \ref{lem:continuation_of_data} 
the operators (\ref{eqn:ops_of_finite_time_thm}) and the inner products (\ref{eqn:all_inner products})
determine the operators
\begin{equation}
\label{eqn:time_continued_ops}
\Lambda_{\Gamma_p, \Gamma_q}^{T + \delta}, \quad p,q = 1, 2, 3,\ p \ne q.
\end{equation}


Repeating this construction, we see that the operators
\begin{equation*}
\Lambda_{\Gamma_p, \Gamma_q}^{T + m\delta}, \quad p,q = 1, 2, 3,\ p \ne q,
\end{equation*}
are determined for all $m \in \N$. 
The claim follows from Theorem \ref{thm:combining_data_infinite_time}.
\end{proof}

\appendix
\section*{Appendix}

Next we prove the Lemma \ref{lem:symmetrization_of_data} stating that the operator 
$\Lambda_{\Gamma_1, \Gamma_2}^T$ determines the operator
$\Lambda_{\Gamma_2, \Gamma_1}^T$. 

\begin{proof}[Proof of Lemma \ref{lem:symmetrization_of_data}]
Define $u := R v^{R f}$, where $v^{Rf}$ is the solution of the equation (\ref{def:wave_eq})
with the boundary data $Rf \in C_0^\infty(\Gamma_1 \times (0,T))$.
Then $u(x,t) = v^{R f}(x, T - t)$ satisfies the equation
\begin{eqnarray*}
&\p_t^2 u + a(x,D)u = 0,\ \text{in $M \times (0, T)$},
\\& u|_{\p M \times (0,T)} = f,
\\& u|_{t = T} = \p_t u|_{t = T} = 0.
\end{eqnarray*}

Integration by parts gives
\begin{eqnarray*}
&(f, \Lambda_{\Gamma_2, \Gamma_1}^T h)_{L^2(\p M \times (0,T))} 
- (R \Lambda_{\Gamma_1, \Gamma_2}^T R f, h)_{L^2(\p M \times (0,T))}
\\&\quad= \int_0^T \int_{\p M} 
\left( u(x,t) \p_\nu v^h(x,t) - (\p_\nu v^{Rf})(x, T - t) v^h(x,t) \right) dS_g(x) dt
\\&\quad= \int_0^T \int_M 
\left( u(x,t) \Delta_g v^h(x,t) - (\Delta_g u)(x, t) v^h(x,t) \right) dV_g(x) dt
\\&\quad= \int_0^T \int_M 
\left( u(x,t) \p_t^2 v^h(x,t) - (\p_t^2 u)(x, t) v^h(x,t) \right) dV_g(x) dt
\\&\quad=
\left[ \int_M \left( u(x,t) \p_t v^h(x,t) - (\p_t u)(x, t) v^h(x,t) \right) dV_g(x) \right]_{t=0}^{t=T} = 0,
\end{eqnarray*}
since $u|_{t = T} = \p_t u|_{t = T} = 0$ and $v^f|_{t = 0} = \p_t v^f|_{t = 0} = 0$.
\end{proof}

\noindent
{\bf Acknowledgements.}
The authors were partly supported by Finnish Centre of Excellence in Inverse Problems Research,
Academy of Finland COE 213476.
LO was partly supported also by Finnish Graduate School in Computational Sciences.

\section*{References}

\end{document}